\newtheorem{thm}{Theorem}[section]
\newtheorem{lem}[thm]{Lemma}
\newtheorem*{main}{Theorem~\ref{main theorem}}
\theoremstyle{definition}
\newtheorem{defn}[thm]{Definition}
\newtheorem*{example}{Example}
\theoremstyle{remark}
\newtheorem{rmk}[thm]{Remark}
\begin{document}

\title[Unknotting and Q-theory]{The unknotting problem and\\ normal surface Q-theory}
\author{Chan-Ho Suh}
\address{
Bard High School Early College\\
30-20 Thomson Ave\\
Long Island City, NY 11101
}
\email{chanhosuh@gmail.com}
\thanks{Research was partially funded by Ryan Budney and a PIMS Postdoctoral Fellowship.}

\subjclass[2000]{Primary 57M, 57N10; Secondary 68Q25}

\date{\today}

\begin{abstract}
Tollefson described a variant of normal surface theory for $3$-manifolds, called Q-theory, where only the quadrilateral coordinates are used.  Suppose $M$ is a triangulated, compact, irreducible, boundary-irreducible $3$-manifold.  In Q-theory, if $M$ contains an essential surface, then the projective solution space has an essential surface at a vertex.  One interesting situation not covered by this theorem is when $M$ is boundary reducible, e.g. $M$ is an unknot complement.  We prove that in this case $M$ has an essential disc at a vertex of the Q-projective solution space.     
\end{abstract}

\maketitle

\section{Introduction}
The problem of determining whether a given knot was unknotted has been of great interest from the earliest days of topology.  In 1954 Wolfgang Haken surprised the mathematical community by announcing at the International Congress of Mathematicians that he had an algorithm to determine if a knot was unknotted or not.  Not only was Haken an ``outsider'' of sorts, but his approach was radically different from the diagrammatic and group-theoretic techniques that preceded him\cite{epple1999}.

Recall that the unknot is the only knot that bounds a disc.  Haken's unknotting algorithm searches for such a disc in the \emph{knot complement}, the 3-manifold obtained by removing the interior of a small tube containing the knot as its core.  The knot complement is decomposed into many tetrahedra, and the decomposition gives a special system of integer linear equations.  \emph{Normal surfaces}, which intersect each tetrahedron in particularly simple pieces, are represented by some of the nonnegative integer solutions of this system.  For a system obtained from decomposing an unknot complement, Haken proved that a bounded region of the solution space contains a solution representing a disc bound by the unknot, and this leads to an unknotting algorithm \cite{haken1961}.

Haken had shown the unknotting problem was solvable in theory, but it remains a formidable challenge to make his approach viable in practice.  The chief bottleneck for Haken's algorithm is the number of variables in the integer linear programming problem.  Solving systems with only a few hundred variables can require large amounts of time and memory even on powerful computers.  Even unknot diagrams with only several dozen crossings can result in trying to solve Haken's equations with thousands of variables.

In 1995 W. Jaco and J. Tollefson took a big step toward a practical implementation of Haken's algorithm by introducing the concept of a \emph{vertex solution} and showed there existed a vertex solution representing an unknotting disc \cite{jaco-tollefson1995}.  Essentially, they considered a ``projectivized'' version of the space of solutions to Haken's equation, which is a polytope.  The vertex solutions are the vertices of this polytope.

Jaco and Tollefson's theorem is important because enumeration of the vertices of a rational polytope is a well-studied problem with simple algorithms.  The set of fundamental solutions used by Haken can be considerably larger and is not so easy to enumerate.  Furthermore, the vertex enumeration can be significantly speeded up by a filtering method by D. Letscher to eliminate vertices which cannot represent embedded surfaces, e.g. see \cite{burton2008}.

There is a version of normal surface theory called Q-theory, due to J. Tollefson \cite{tollefson1998}.  Its basis is the observation that the quadrilateral coordinates essentially determine a normal surface.  Thus we can more than halve the number of necessary normal variables by utilizing Q-theory.

Vertex enumeration, with filtering, appears to be considerably more efficient with Q-theory rather than standard normal surface theory, even when comparing similar numbers of variables \cite{burton2008}.  Intuitively, one might expect that to happen since any two quad coordinates in a tetrahedron conflict and this greatly restricts the space of normal surfaces as one enumerates vertices.  Nonetheless, a drawback of Q-theory is that with unlike Haken's theory, it has not been known if a spanning disc for the unknot is found at the vertices of the projective polytope for Q-theory.  

In this paper, we prove a normalized unknotting disc, which is \emph{minimal} in a certain technical sense, appears as a vertex in Q-theory:

\begin{main}
Let $D$ be an unknotting disc in normal form.  Suppose it minimizes $(\text{weight}(D), \sigma(D))$, where two pairs are compared lexicographically from left to right.  Then $D$ is a $Q$-vertex surface.
\end{main}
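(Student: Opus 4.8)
The plan is to argue by contradiction: assuming $D$ is not a $Q$-vertex surface, I will split it and extract from the pieces an unknotting disc that is strictly smaller in the order $(\text{weight}, \sigma)$, contradicting the minimality hypothesis. If $D$ is not a $Q$-vertex surface, then its vector of quadrilateral coordinates $\mathbf{q}(D)$ lies in the relative interior of a face of positive dimension, so it can be written as a sum $\mathbf{q}(D) = \mathbf{q}_1 + \mathbf{q}_2$ of two nonzero admissible $Q$-solutions. Because the entries are nonnegative and sum to those of $\mathbf{q}(D)$, each $\mathbf{q}_i$ uses only the quadrilateral types already present in $D$; in particular $\mathbf{q}_1$ and $\mathbf{q}_2$ are compatible, so the splitting is geometrically meaningful.

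First I would realize the splitting by normal surfaces. Using the reconstruction of a normal surface from admissible quadrilateral data, let $F_i$ be the normal surface associated to $\mathbf{q}_i$, taken in reduced form (no vertex-linking components). The geometric sum $F_1 + F_2$ then has quadrilateral vector $\mathbf{q}(D)$, so by uniqueness of the reconstruction it agrees with $D$ away from vertex-linking pieces. I will use the two standard additivity facts for this sum: $\text{weight}(F_1 + F_2) = \text{weight}(F_1) + \text{weight}(F_2)$ and $\chi(F_1 + F_2) = \chi(F_1) + \chi(F_2)$. Since $\mathbf{q}_i \neq 0$ forces a quadrilateral, which meets the $1$-skeleton, each summand has strictly positive weight, and hence $\text{weight}(F_i) < \text{weight}(D)$ for $i = 1, 2$.

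Next I would track the boundary on the torus $T = \partial M$. The curve $\partial D$ is a single essential (meridian) curve of the solid torus $M$, while the boundaries of any vertex-linking discs are inessential on $T$; the regular exchange that produces $\partial F_1$ and $\partial F_2$ preserves total homology, so the boundary classes of the two summands add to the nonzero meridian class, and at least one summand, say $F_1$, must inherit an essential boundary curve. Thus $F_1$ is a normal surface in the solid torus $M$ carrying essential boundary, so after discarding closed and boundary-parallel components and compressing, $F_1$ yields a meridian disc, i.e.\ an unknotting disc $D'$ with $\text{weight}(D') \le \text{weight}(F_1) < \text{weight}(D)$. Normalizing $D'$ does not raise its weight, and this contradicts the weight-minimality of $D$.

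The step I expect to be the main obstacle is control of the vertex-linking components that the quadrilateral coordinates cannot detect, and it is exactly here that the secondary complexity $\sigma$ is forced into the argument. Because $D$ need not equal $F_1 + F_2$ on the nose but only up to such components, recovering a genuine unknotting disc and, more delicately, guaranteeing a strict decrease can fail at the level of weight alone: in the degenerate configurations the extracted disc ends up with $\text{weight}(D') = \text{weight}(D)$. The role of $\sigma$ is to break precisely these ties, so that when weight is preserved one still has $\sigma(D') < \sigma(D)$, contradicting lexicographic minimality. Verifying that $\sigma$ is engineered to drop strictly in every such vertex-linking case, while the boundary-reducibility of $M$ (the situation left open by Tollefson's theorem) is handled directly through the meridian disc rather than through incompressibility, is the crux of the proof.
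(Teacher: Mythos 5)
Your proposal follows the naive ``decompose and extract a smaller disc'' strategy, and it fails at exactly the point you flag but defer rather than solve. If $D$ is not a Q-vertex surface, the decomposition you get (after clearing denominators, $n\,\mathbf{q}(D)=\mathbf{q}_1+\mathbf{q}_2$, not $\mathbf{q}(D)$ itself) does \emph{not} realize $D$ as a geometric sum of the reduced surfaces $F_1,F_2$: Tollefson's uniqueness theorem only gives $F_1+F_2=nD+\Sigma$ where $\Sigma$ is a union of vertex-linking discs and spheres. Vertex-linking surfaces have strictly positive weight, so weight additivity yields $\operatorname{weight}(F_1)+\operatorname{weight}(F_2)=n\operatorname{weight}(D)+\operatorname{weight}(\Sigma)$, and your key inequality $\operatorname{weight}(F_i)<\operatorname{weight}(D)$ simply does not follow. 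This is not a tie-breaking technicality that $\sigma$ patches; it is the entire difficulty that distinguishes the Q-theory statement from the classical one, and your proposal offers no mechanism for it beyond asserting that $\sigma$ is ``engineered to drop strictly,'' which is unproven (and is not how $\sigma$ actually functions in the paper). There is a second, independent gap: even granted a summand $F_1$ of small weight with essential boundary, your extraction of a meridian disc by ``discarding components and compressing'' does not control weight, because a compressing disc can meet the $1$-skeleton arbitrarily, so compression can raise weight, and normalizing afterwards only bounds the result by the weight of the \emph{compressed} surface, not by $\operatorname{weight}(F_1)$.

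The paper's route is genuinely different and is built precisely to sidestep these problems. It first proves a Q-theory analogue of the Jaco--Tollefson criterion (Theorem~\ref{q-vertex criterion}): if every exchange annulus or disc for $D$ bounds disjoint adjacent discs that are normally isotopic across it, then $D$ is a Q-vertex surface. The vertex-linking surfaces $\Sigma$ are handled there, using irreducibility of $M$, at the level of disc patches and disc swaps rather than at the level of weights and Euler characteristics. The main theorem then proceeds by taking an exchange annulus or disc $A$ violating that criterion; weight-minimality of $D$ forces the two discs $D_1,D_2$ bounded by $\partial A$ to be disjoint, adjacent, and of equal weight, and minimality of $\sigma$ within the weight class forces the disc $D'$ obtained by swapping all parallel copies of $D_2$ for $D_1$ to have the \emph{same} size as $D$ --- which is exactly the hypothesis of Lemma~\ref{swap lemma} needed to extend $A$ to a longer exchange annulus reaching a new disc $D_3$ not normally isotopic to $D_1$. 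Iterating produces an ever-growing exchange annulus, which must eventually exhaust the normal discs of $D'$; that impossibility is the contradiction. So $\sigma$'s role is to keep the size constant so the extension goes through, not to decrease and break ties. To make your outline work you would essentially have to reprove Theorem~\ref{q-vertex criterion} and Lemma~\ref{swap lemma}; there is no shortcut through additivity of weight and Euler characteristic alone.
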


We should note that much of this paper is a reworking of the arguments of \cite{jaco-tollefson1995}.  Besides the main theorem, what is new is the generalization of Lemma 4.3  in \cite{jaco-tollefson1995} to the setting of Q-theory; in this paper, we call this the Jaco--Tollefson criterion (Theorem~\ref{jaco--tollefson criterion}).  After obtaining our version of the criterion (Theorem~\ref{q-vertex criterion}), the proof of Theorem~\ref{main theorem} then proceeds as in standard normal surface theory.  However, rather than stopping with Theorem~\ref{q-vertex criterion} and then citing the necessary results from \cite{jaco-tollefson1995} we have included a simplified and streamlined version of their arguments.  This not only makes our argument self-contained, but we think the reader will thus be prepared to tackle the more general and intricate arguments of \cite{jaco-tollefson1995}.

\section{Preliminaries}

We begin by briefly reviewing Haken's normal surface theory and then describe vertex solutions and Q-theory.

\subsection{Normal surfaces}
Let $M$ be a 3--manifold with triangulation $T$.  A properly-embedded surface $S$ is said to be \emph{normal} with respect to $T$ if $S \cap \Delta^3$ is a (possibly empty) collection of \emph{normal discs} for every tetrahedron $\Delta^3$ of $T$.  A normal disc in a tetrahedron is either a triangle, which separates one corner of the tetrahedron from the other three, or a quadrilateral (``quad''), which separates one pair of vertices from the other two.  See Figure~\ref{fig:elementary discs}.  It is clear that there are four types of triangles and three types of quads in a single tetrahedron.  Note that a normal surface may intersect a tetrahedron in more than one type of normal disc, and each type that appears may do so with multiple copies; however, a normal surface cannot intersect a tetrahedron in two different quad types, as that would cause self-intersection of the surface.

\begin{figure}
\begin{center}
\includegraphics[width=2.8in]{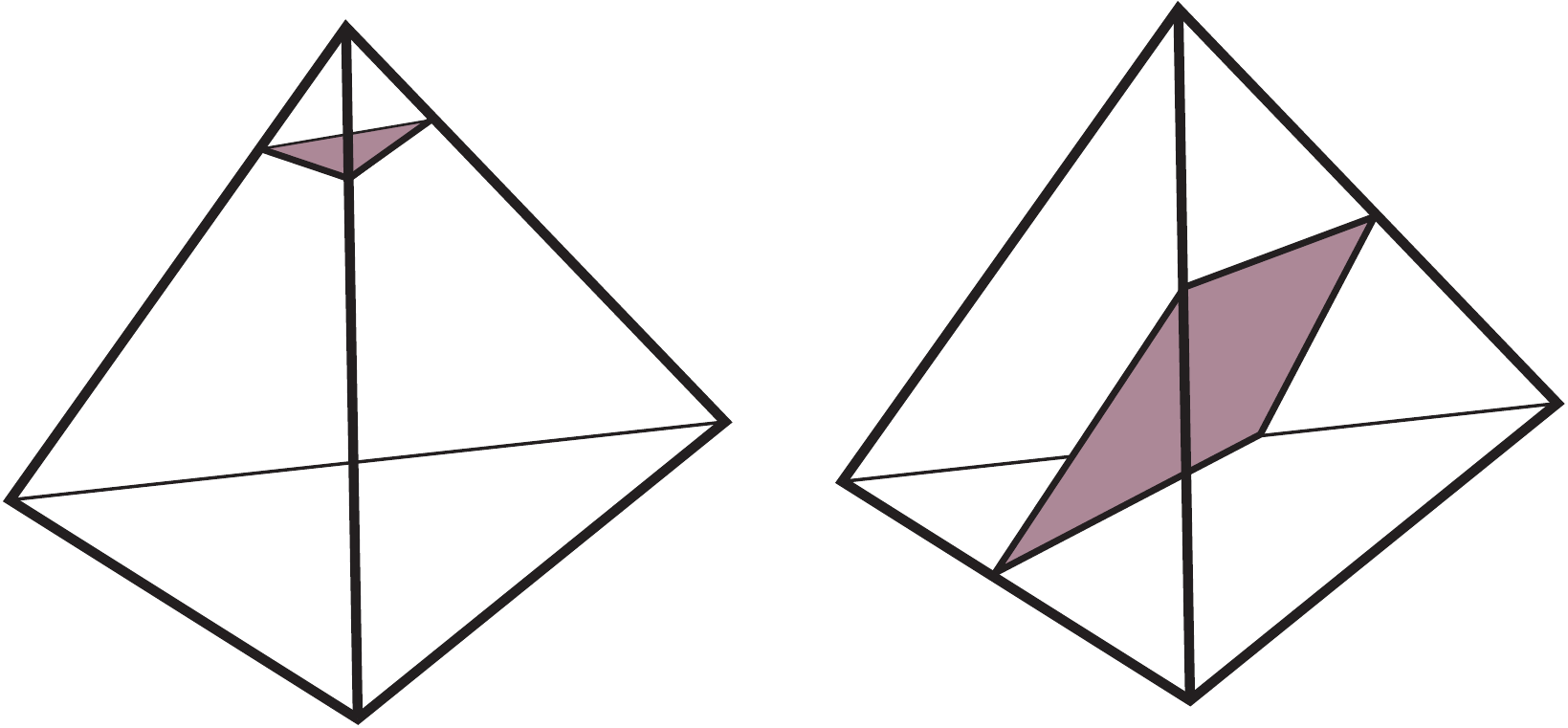}
\caption{Left - a triangle; right - a quad}
\label{fig:elementary discs}
\end{center}
\end{figure}

Haken gave a normalization procedure that inputs a compact, properly-embedded surface $S$ in a triangulated $3$-manifold $M$ and outputs a normal surface $S'$ in the same homology class as $S$ (possibly the output is the empty set).  The only properties of this procedure that we will require are: 1) when $M$ is irreducible, $S'$ is isotopic to $S$.  2) $S'$ has at most the weight of $S$. (Recall the \emph{weight} of a surface transverse to the $1$-skeleton of $M$ be the number of intersections with the $1$-skeleton.)

\subsection{Integer linear equations}
Since each normal arc type on the common face can come from two types of normal discs in each tetrahedron, a triangle and a quad  

Suppose we have numbered the normal discs for all tetrahedra in the triangulation, and $x_n$ represents the number of normal discs of the $n$-th type.  Consider a face $\Delta$ shared by two tetrahedra and suppose a normal surface passes through $\Delta$.  The normal discs abutting $\Delta$ from either either side must match.  For a normal arc type $\alpha$ on $\Delta$ and a tetrahedron with face $\Delta$, there are two normal disc types which have a normal arc of type $\alpha$ in their boundaries, a quad and a triangle.  

Thus for each interior face $\Delta$ and a normal arc type $\alpha$ on $\Delta$, we have an equation of the form $x_i + x_j = x_k + x_l$, where the $i$-th and $j$-th normal disc types belong to the same tetrahedron and have boundary containing a normal arc of type $\alpha$ and similarly for $x_k$ and $x_l$.

So normal surfaces are non-negative integral solutions to this system of integer linear equations.  The crucial observation here is that the other direction is ``almost'' true:

\begin{defn}[\bf Quad condition]
A solution vector to the matching equations satisfies the \emph{quad condition} if for every pair of coordinates corresponding to quad types in the same tetrahedron, at most one is nonzero.
\end{defn}

\begin{thm}[\bf Unique realization]
Suppose $v$ is a nonnegative, integral solution to the matching equations given by a triangulation $\mathcal T$ of $M$.  If $v$ satisfies the quad condition, then $v$ is the solution vector corresponding to a normal surface $F$, i.e. $v= v(F)$, and $F$ is unique.  
\end{thm}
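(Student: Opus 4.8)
The plan is to construct $F$ directly from the data recorded in $v$, building it one tetrahedron at a time and then gluing across faces, so that uniqueness emerges from the fact that every choice in the construction is forced. First I would work inside a single tetrahedron $\sigma$. The coordinates of $v$ supported on $\sigma$ prescribe a multiplicity for each of the four triangle types and each of the three quad types. A disjoint, embedded realization could fail only if two distinct normal disc types in $\sigma$ were forced to meet; among the seven types the sole obstructing pair is two \emph{different} quads, which always cross. The quad condition rules out precisely this pair, so I can place the prescribed number of parallel copies of each triangle type together with copies of the single allowed quad type as a disjoint union of properly embedded normal discs $F_\sigma \subset \sigma$, meeting $\partial\sigma$ in a system of disjoint normal arcs.

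Next I would glue. Let $\Delta$ be an interior face shared by $\sigma$ and $\sigma'$. Both $F_\sigma$ and $F_{\sigma'}$ meet $\Delta$ in a system of disjoint normal arcs, and such a system is, up to normal isotopy of $\Delta$, completely determined by the triple of nonnegative integers counting its arcs of the three types: near each corner of $\Delta$ one draws that many nested parallel arcs, and arcs about distinct corners never interfere, so every triple is realizable and the realization is unique. This combinatorial lemma is, I expect, the main point to get right. The matching equations are exactly the assertion that for each normal arc type $\alpha$ on $\Delta$ the count coming from $\sigma$ equals the count coming from $\sigma'$: the equation $x_i + x_j = x_k + x_l$ records the one triangle and the one quad on each side that carry an $\alpha$-arc. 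Hence the two induced arc systems agree, and I can isotope within a collar of $\Delta$ so that they coincide pointwise and then identify $F_\sigma$ with $F_{\sigma'}$ along $\Delta$.

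Carrying out this identification over all interior faces yields a surface $F$; on faces lying in $\partial M$ no matching is imposed, so $F$ is properly embedded, it is normal by construction, and $v(F)=v$. The genuinely global point I would check here, and the other place the real content lies, is that the face gluings are mutually compatible around each edge $e$. The number of points in which $F$ meets $e$, computed inside a fixed tetrahedron, is independent of which of its two faces containing $e$ one reads it from, since it is literally the same collection of discs; the per-face matchings then propagate this count consistently around the cycle of tetrahedra about $e$, so the assembly closes up without conflict.

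Finally, for uniqueness I would argue that any normal surface $G$ with $v(G)=v$ meets each $\sigma$ in a disc system with the same type-multiplicities as $F_\sigma$, hence normally isotopic to it, and the induced arc systems on the faces are determined as above. Assembling these normal isotopies shows $G$ is normally isotopic to $F$, so $F$ is unique.
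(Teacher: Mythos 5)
Your construction is correct, and it is essentially the argument the paper has in mind: the paper does not actually prove this statement (it is quoted as standard background from Haken's theory), but the remark following it---``we should be able to piece together the normal discs into a normal surface''---is exactly the tetrahedron-by-tetrahedron assembly you carry out, with the quad condition ruling out the unique obstructing pair and the matching equations forcing the face gluings.

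One organizational point deserves care in a full write-up: as described, your collar isotopies at different faces of a single tetrahedron can interfere, because matching pointwise along a face $\Delta_1$ moves arc endpoints on an edge $e$ shared with another face $\Delta_2$, and the later matching along $\Delta_2$ moves those same endpoints again, breaking what was already arranged on $\Delta_1$. The standard fix is to work up the skeleta rather than face by face: first fix the intersection points on each edge of the triangulation (the counts agree by the matching equations and the orderings---triangles nested at the two ends, quads in the middle---are forced, which is exactly your ``compatibility around each edge'' observation), then match the arc systems on each face by an isotopy rel the edges, and only then fill in the disc systems inside each tetrahedron. The same skeleton-first ordering is what makes your uniqueness step airtight: normally isotope $G$ to agree with $F$ on the $1$-skeleton, then on the $2$-skeleton, and finish with the fact that disc systems in a tetrahedron with identical boundaries are isotopic rel boundary. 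This is a rearrangement of the facts you already identified, not a missing idea.
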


\begin{rmk}
Simply put, as long as we do not have two different quad types in a tetrahedron, the obvious cause of self-intersection, we should be able to piece together the normal discs into a normal surface.  In fact, this surface is the \emph{unique} realization of the solution. 
\end{rmk}

\subsection{Haken sum}
Suppose $S$ is a normal surface.  Consider its solution vector $v(S)$ to the matching equations.  If $v(S) = v_1 + v_2$, where each $v_i$ is also a nonnegative integral solution, then $v_i = v(S_i)$ for a unique normal surface $S_i$ since each summand must satisfy the quad condition.  We say $S$ is the \emph{Haken sum} of $S_1$ and $S_2$ and write $S = S_1 + S_2$.

If $S_1$ and $S_2$ are disjoint, then $S$ is simply the disjoint union of the two surfaces.  However, it is not always the case that the $S_i$'s are disjoint.  For example, if $S$ is connected, then $S_1 \cap S_2$ cannot be empty as otherwise $S$ would be a disjoint union $S_1 \cup S_2$ and this contradicts the uniqueness of the geometric realization of $v(S)$.

Intersections occur because in order to realize non-intersecting normal discs in a tetrahedron this forces an ordering of the normal arcs of their boundaries on each face.  The orderings coming from the $S_1$ and $S_2$ in two tetrahedra adjacent to a common face may not be compatible and this is what forces the intersections.   

Consider two normal discs intersecting in a tetrahedron.  We can suppose that they intersect transversally in arcs.  An arc of intersection is \emph{regular} if there is a normal isotopy of the discs removing the arc.  In a Haken sum, the normal discs of $S_1$ and $S_2$ intersect in regular arcs; see Figure~\ref{fig:haken_sum} for an example and note that the only disc combination that will not intersect in regular arcs are two different quad types.      

An alternate view of this is to note that one can cut and paste along a regular arc to result in two normal discs of the same type(s) as before (Figure~\ref{fig:haken_sum3}).  This cut-and-paste operation is called a \emph{regular exchange}.  A \emph{regular curve} is a union of regular arcs.  Note the other way of cutting and pasting, the \emph{irregular exchange} (Figure~\ref{fig:haken_sum2}), does not give a normal surface  as a resultant disc crosses over a face more than once.

Given a Haken sum $S=S_1 + S_2$ we can, and will from now on, assume the intersection $S_1 \cap S_2$ consists of regular curves.  

Conversely, given two normal surfaces $S_1$ and $S_2$ which intersect in regular curves, $v = v(S_1) + v(S_2)$ must satisfy the quad condition.  So $v= v(S)$ for a unique $S$, and $S = S_1 + S_2$.  Geometrically, $S$ is obtained from $S_1$ and $S_2$ by doing regular exchanges along the curves of $S_1 \cap S_2$.

\begin{figure}
\begin{center}
\subfigure[A triangle and quad intersection \label{fig:haken_sum}]{
\includegraphics[width=3.6cm]{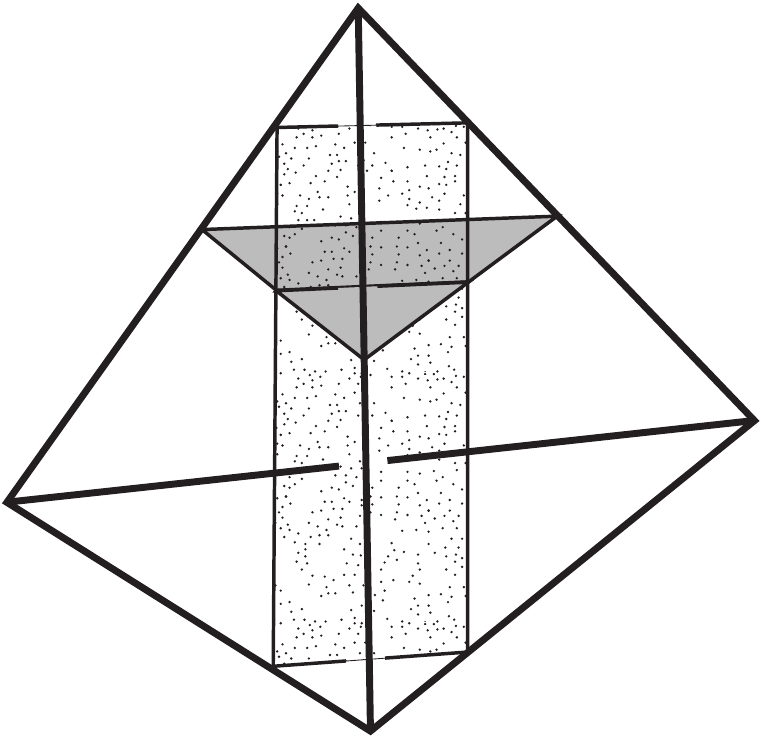}
}
\hspace{.2cm}        
\subfigure[An irregular switch \label{fig:haken_sum2}]{
\includegraphics[width=3.6cm]{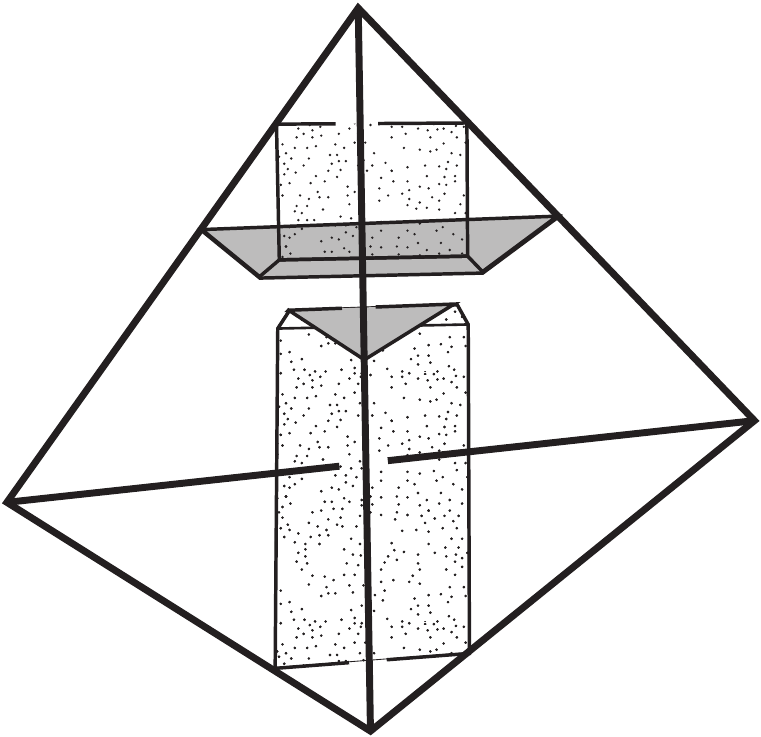}
}
\hspace{.2cm}
\subfigure[A regular switch \label{fig:haken_sum3}]{
\includegraphics[width=3.6cm]{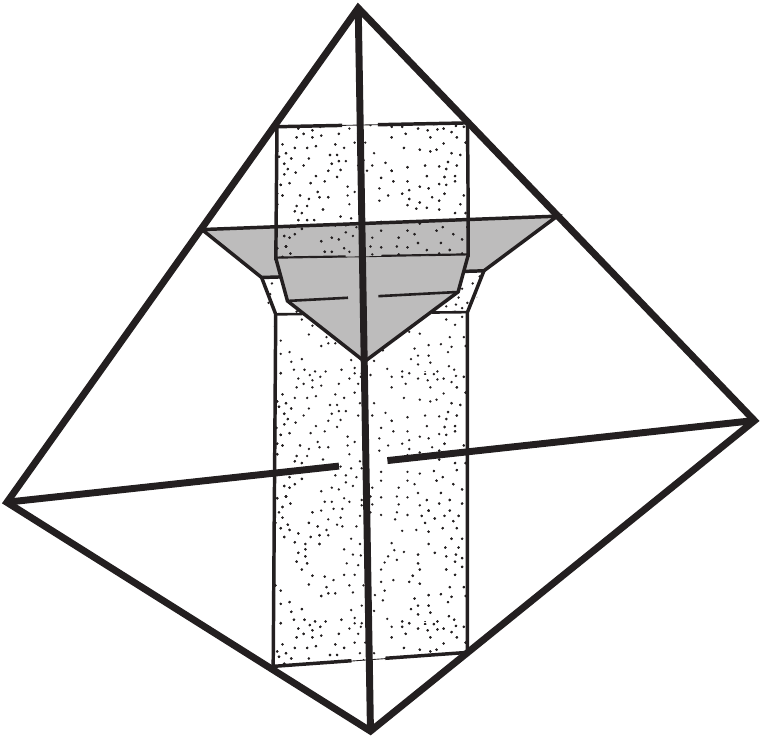}
}
\caption{Resolving intersecting normal discs by cut-and-paste}
\label{haken_sum}
\end{center}
\end{figure}

\subsection{Vertex solutions}

The space of solutions to the matching equations is a polyhedral cone in $\mathbb R^{7t}$ restricted to the positive octant ($t$ is the number of tetrahedra in the triangulation).  If we take all the rational solutions of unit Euclidean length, this is called the projective solution space, $\mathcal P$, to the matching equations.  Each projective solution can be thought of as taking a normal surface vector and normalizing it to be unit length.  

$\mathcal P$ is a convex polytope (the compact, finite intersection of closed half spaces), and its vertices are called vertex solutions.  Every vertex solution has an integral representative that is fundamental, but the converse does not hold; \cite{jaco-tollefson1995} gives examples of fundamental solutions do not project to a vertex.   

\begin{defn}[\bf Vertex solution]
An integral solution $w$ to the matching equations is a \emph{vertex solution} if it projects to a vertex $v$ of the projective solution space and $w = n\cdot v$ with $n$ the smallest positive integer for any vector satisfying these conditions.
\end{defn}

\begin{defn}[\bf Vertex surface]
Let $F$ be a normal surface.  Then $F$ is a \emph{vertex surface} if it is connected, two-sided, and $v(F)$ projects to a vertex of the projective solution space to the normal equations.
\end{defn}

The following is fairly self-explanatory:

\begin{lem}
The normal vector of a vertex surface is either a vertex solution or twice a vertex solution.  In the latter case, the vertex surface double covers a one-sided normal surface whose normal vector is a vertex solution.
\end{lem}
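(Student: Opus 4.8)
The plan is to reduce the whole statement to the Unique Realization Theorem together with a picture of a regular neighborhood. First I would locate $v(F)$ on the extreme ray of the solution cone determined by the vertex of $\mathcal P$ to which it projects. By definition the vertex solution $w$ is the least integral vector projecting to that vertex, and every integral vector on the ray is a positive integer multiple of it, so $v(F) = k\,w$ for some positive integer $k$. The entire lemma then amounts to showing $k \in \{1,2\}$, with $k = 2$ forcing the one-sided/double-cover conclusion.

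Next I would produce the surface realizing $w$. Since $v(F)$ satisfies the quad condition and $w = \tfrac{1}{k}\,v(F)$ is a positive scalar multiple with the same support, $w$ also satisfies the quad condition; by the Unique Realization Theorem $w = v(G)$ for a unique normal surface $G$, so $v(F) = k\,v(G)$. The key geometric input is that the frontier $\partial N(G)$ of a regular neighborhood is again normal with $v(\partial N(G)) = 2\,v(G)$, because in each tetrahedron every normal disc of $G$ contributes two parallel discs of the same type. Globally $\partial N(G)$ is two disjoint copies of $G$ when $G$ is two-sided and a single connected surface double covering $G$ when $G$ is one-sided.

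The heart of the argument, and the step I expect to be the main obstacle, is to identify $F$ with an explicit ``parallel copies'' surface and then invoke uniqueness. If $G$ is two-sided, then $k$ disjoint normal push-offs of $G$ form an embedded normal surface with vector $k\,v(G) = v(F)$; being normal it satisfies the quad condition, so the Unique Realization Theorem identifies it with $F$, whence $F$ has $k$ components and connectedness forces $k = 1$. If $G$ is one-sided, I would instead build the realization of $k\,v(G)$ from copies of $G$ and of its double cover $\partial N(G)$: when $k = 2m$, take $m$ disjoint copies of $\partial N(G)$ (total vector $2m\,v(G)$); when $k$ is odd, take $G$ itself together with $(k-1)/2$ copies of $\partial N(G)$ (total vector $k\,v(G)$). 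In each case the exhibited surface is an embedded normal surface with vector $v(F)$, hence equals $F$ by uniqueness. Connectedness of $F$ then rules out $m \ge 2$ in the even case and rules out the odd case entirely (there the realization is disconnected for $k \ge 3$, and is the one-sided $G$ for $k=1$); two-sidedness of $F$ separately excludes any realization containing the one-sided component $G$.

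The only surviving outcomes are $k = 1$, so $v(F) = w$ is a vertex solution, and $k = 2$ with $G$ one-sided and $F = \partial N(G)$ the connected double cover of $G$, so $v(F) = 2w$ is twice a vertex solution whose underlying one-sided surface $G$ has vector $w$. The care needed throughout is to verify that each candidate ``parallel copies'' surface really is embedded, normal, and carries the claimed coordinate vector, since that is exactly what lets the Unique Realization Theorem pin it down to be $F$.
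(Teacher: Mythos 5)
Your proof is correct. The paper itself offers no argument for this lemma---it is introduced only with the remark that it is ``fairly self-explanatory''---so there is nothing to compare against; your write-up supplies exactly the standard reasoning that remark gestures at: every integral point on the extreme ray is an integer multiple of the minimal one, the quad condition depends only on the support of the vector, and the Unique Realization theorem identifies $F$ with the explicit realization of $k\,v(G)$ by parallel copies of $G$ (or of $\partial N(G)$ when $G$ is one-sided), after which connectedness and two-sidedness of $F$ force either $k=1$, or $k=2$ with $G$ one-sided and $F=\partial N(G)$ its double cover.
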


\begin{lem}\label{basic vertex criterion}
$F$ is a vertex surface if for every equation $nF = X + Y$ where $n$ is a positive integer $\geq2$, both $X$ and $Y$ are normally isotopic to copies of $F$ and if $F = X + Y$, then $X$ must be normally isotopic to $Y$.
\end{lem}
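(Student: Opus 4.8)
The plan is to show that the stated hypotheses force the projective class of $v(F)$ to be an extreme point of $\mathcal P$; combined with the standing requirement (built into the definition of a vertex surface) that $F$ is connected and two-sided, this is exactly what it means for $F$ to be a vertex surface. Recall that $\mathcal P$ is the intersection of the rational polyhedral solution cone $C\subseteq\mathbb R^{7t}$ with the unit sphere, so after radial projection to a transverse hyperplane the vertices of $\mathcal P$ correspond precisely to the extreme rays of $C$. Hence it suffices to prove that the ray $\mathbb R_{\ge0}\,v(F)$ is extreme, i.e.\ that whenever $v(F)=a_1+a_2$ with $a_1,a_2\in C$, both $a_i$ are nonnegative multiples of $v(F)$.

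I would argue by contradiction. Suppose $v(F)=a_1+a_2$ with $a_1,a_2\in C$ but $a_1$ is not a multiple of $v(F)$ (so neither is $a_2$). Because $C$ is a rational cone and $v(F)$ is rational, the minimal face of $C$ containing $v(F)$ in its relative interior is rational, so we may choose $a_1,a_2$ rational. Clearing denominators, pick a positive integer $n$ with $na_1$ and $na_2$ integral; then $n\,v(F)=na_1+na_2$, where $n\,v(F)=v(nF)$ is the vector of $n$ parallel copies of $F$. Since $n\,v(F)$ satisfies the quad condition and the quad condition descends to the summands of any nonnegative integral decomposition (as noted in the discussion of Haken sums), both $na_1$ and $na_2$ satisfy it. By the unique realization theorem they are the vectors $v(X)$ and $v(Y)$ of uniquely determined normal surfaces $X$ and $Y$, and $nF=X+Y$ is a genuine Haken sum.

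Now I would feed this decomposition into the hypotheses. If $n\ge2$, the first hypothesis applies: $X$ is normally isotopic to some number of parallel copies of $F$, so $na_1=v(X)=j\,v(F)$ for an integer $j$, whence $a_1=(j/n)\,v(F)$ is a multiple of $v(F)$, contradicting the choice of $a_1$. (If the clearing happens to give $n=1$, one can either rescale at the outset by replacing $a_i$ with $2a_i$ to force $n\ge2$, or invoke the second hypothesis directly: $F=X+Y$ forces $X$ normally isotopic to $Y$, so $v(X)=v(Y)=\tfrac12 v(F)$ and again $a_1$ is a multiple of $v(F)$.) In every case we reach a contradiction, so the ray $\mathbb R_{\ge0}\,v(F)$ is extreme and $F$ is a vertex surface.

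I expect the main obstacle to be the bridge between the purely polytope-theoretic failure of the vertex condition and an honest \emph{integral} Haken decomposition to which the geometric hypotheses can be applied: one must be sure that the algebraic summands $a_1,a_2$ can be taken rational and then cleared to integral solutions realized by actual normal surfaces. The rationality of $C$ and the descent of the quad condition are exactly the tools that legitimize this passage; once an actual Haken sum $nF=X+Y$ is in hand, the hypotheses close the argument immediately.
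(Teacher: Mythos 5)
The paper offers no proof of Lemma~\ref{basic vertex criterion} at all (it is presented as essentially self-explanatory), so there is nothing to compare against line by line; your argument is correct and supplies exactly the standard justification being taken for granted: vertices of $\mathcal P$ correspond to extreme rays of the rational solution cone, a non-extreme $v(F)$ admits a rational and hence (after clearing denominators) integral decomposition $n\,v(F)=v(X)+v(Y)$, descent of the quad condition together with unique realization turns this into a genuine Haken sum $nF=X+Y$, and the hypotheses then force $v(X)$ and $v(Y)$ to be multiples of $v(F)$, a contradiction. Your two caveats are also handled correctly: the $n=1$ edge case is disposed of either by doubling or by invoking the second hypothesis, and you rightly flag that the conclusion requires the standing assumptions, implicit in the paper's definition of vertex surface but omitted from the lemma's statement, that $F$ is connected and two-sided.
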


\subsection{Jaco and Tollefson's criterion}
For $F$ a disc, Jaco and Tollefson discovered a useful way of checking if $F$ vertex, utilizing exchange surfaces and disc patches:

\begin{defn}[\bf exchange surface]
An \emph{exchange surface} for a normal surface $F$ in $M$ is either an annulus, M\"obius band, or disc which is embedded in $M$ so that:
\begin{enumerate}
\item its interior is disjoint from $F$ and its boundary is contained in $F \cup \partial M$
\item its regular neighborhood in $M$ is orientable
\item it intersects each truncated tetrahedron $\Delta^3$ in (possibly many) 0-weight discs $E$
\item $\partial E = \alpha_1 \cup \alpha_2 \cup \gamma_1 \cup \gamma_2$, four arcs with pairwise disjoint interiors.  The $\gamma_i$'s are contained in distinct normal discs $D_1$ and $D_2$ of $F$.  The $\alpha_i$'s are contained in two different faces of $\Delta^3$.  
\item the inverse of a regular exchange exists by cutting $D_1$ and $D_2$ along $\partial E$ and regluing 
\end{enumerate}
\end{defn}

\begin{defn}[\bf exchange system]
An \emph{exchange system} for a normal surface $F$ is a collection of disjoint exchange surfaces for $F$.
\end{defn}

\begin{defn}[\bf proper exchange system]
An exchange system for $F$ is \emph{proper} if doing inverse regular exchanges across the exchange surfaces results in two families of embedded surfaces, $X$ and $Y$.  Then $F = X + Y$, where the addition is Haken sum. 
\end{defn}

\begin{rmk}
In other words, if an exchange system is not proper, it is because the result of all the inverse regular exchanges results in a self-intersecting, connected surface possibly along with other connected surfaces.
\end{rmk}

\begin{defn}[\bf disc patch]
Suppose $\mathcal A$ is an exchange system for $F$, a normal surface in $M$.  Then $D$ is a \emph{disc patch} relative to $\mathcal A$ if it is a discal component of the complement of the boundary curves of the components of $\mathcal A$ in $F$ and either: 1) its boundary, under closure, consists of one boundary component of an exchange annulus, or 2) its boundary, under closure, consists of one boundary arc from an exchange disc, and one arc in $\partial M$.
\end{defn}

\begin{rmk}
In the second case, we will abuse language by referring to the boundary arc of the exchange disc as ``bounding'' the disc patch, although technically the arc is not all of the disc patch's boundary.
\end{rmk}

\begin{defn}[\bf normally isotopic disc patches]
Two disc patches $D_1$ and $D_2$ are normally isotopic across an exchange surface if there is a normal isotopy from $D_1$ to $D_2$ keeping their boundaries in the exchange surface.  We allow the boundary $\partial D_1$ to self-intersect during this ``isotopy'' (this is only necessary for when the exchange surface is a M\"obius band).
\end{defn}

\begin{defn}[\bf adjacent disc patches]
Two disc patches are \emph{adjacent} along an exchange annulus or disc $A$ if they abut $\partial A$ on the same side (recall $A$ must be two-sided).  
\end{defn}

Now that we have defined all the terms, we can state the important criterion from \cite{jaco-tollefson1995}:

\begin{thm}[\bf Jaco--Tollefson criterion]\label{jaco--tollefson criterion}
Let $D$ be a properly-embedded normal disc.  Then $D$ is a vertex surface if and only if every exchange annulus/disc for $D$ bounds disjoint discs in $D$ which are normally isotopic.
\end{thm}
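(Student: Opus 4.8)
The plan is to prove both implications by passing back and forth between Haken decompositions $D = X + Y$ and proper exchange systems, using Lemma~\ref{basic vertex criterion} as the working characterization of a vertex surface. The dictionary I would set up first is the following: given a Haken sum $D = X + Y$, the regular curves of $X \cap Y$ are traced out by a collection of exchange surfaces that together form a proper exchange system $\mathcal{A}$; conversely, performing the inverse regular exchanges across a proper exchange system recovers a decomposition $D = X + Y$ by the converse half of the Haken-sum discussion. Under this dictionary the disc patches of $D$ relative to $\mathcal{A}$ are exactly the pieces that get sorted between $X$ and $Y$, and the sub-discs of $D$ cut off by a single exchange surface are normally isotopic precisely when the two disc patches adjacent across that surface are normally isotopic.

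For the forward implication, suppose $D$ is a vertex surface and let $A$ be an exchange annulus or disc for $D$. First I would check that $\{A\}$ is by itself a proper exchange system: because the regular neighborhood of $A$ is orientable and $\mathrm{int}\,A$ is disjoint from $D$, the single inverse regular exchange across $A$ reglues the two normal discs $D_1, D_2$ meeting $A$ without creating a self-intersection, so it outputs embedded surfaces and hence a genuine Haken sum $D = X + Y$. Since $D$ is a vertex, Lemma~\ref{basic vertex criterion} forces $X$ to be normally isotopic to $Y$. The boundary of $A$ (two circles for an annulus, or one spanning arc together with an arc of $\partial M$ for a disc) cuts off the sub-discs of $D$ that $A$ bounds, and these become the disc patches carried respectively by $X$ and $Y$; restricting the normal isotopy $X \cong Y$ to these patches then supplies the required normal isotopy between the two discs. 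Disjointness I would read off directly from the disc-patch structure: the two sub-discs are distinct components of $D$ cut along $\partial A$, hence have disjoint interiors.

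For the reverse implication I would argue the contrapositive. If $D$ is not a vertex, then by Lemma~\ref{basic vertex criterion} there is a nontrivial Haken sum---either $D = X + Y$ with $X$ not normally isotopic to $Y$, or $nD = X + Y$ with $n \geq 2$ and some summand not a copy of $D$. Passing to the proper exchange system $\mathcal{A}$ associated to this decomposition, I would choose an innermost exchange surface $A$, meaning one bounding a disc patch that is an honest sub-disc of $D$ containing no further curves of $\partial\mathcal{A}$ in its interior. The inverse exchange across $A$ separates the two innermost disc patches onto opposite sides of the decomposition, so they are identified with genuinely different pieces of $X$ and $Y$; hence $A$ cannot bound normally isotopic discs, which is the desired witness. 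The innermost choice is precisely what guarantees that $A$ bounds honest embedded sub-discs rather than more complicated subsurfaces, so a single exchange annulus or disc suffices.

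The step I expect to be the main obstacle is making the dictionary of the first paragraph precise enough to transport normal isotopies in both directions, and in particular carrying out the innermost induction cleanly. The delicate points I anticipate are: confirming that a lone exchange surface---especially an exchange disc, whose boundary runs partly along $\partial M$---always yields a proper rather than self-intersecting result; organizing the innermost-patch induction so that stripping off one ``trivial'' exchange surface at a time reduces $X + Y$ to copies of $D$ without disturbing the remaining surfaces of $\mathcal{A}$; and dispatching the one-sided case. The latter is why the statement restricts to annuli and discs: a M\"obius-band exchange surface meets $D$ in a single curve and so bounds only one sub-disc, with no pair to compare, and it is handled instead through the double-cover clause built into the definition of normally isotopic disc patches rather than by the ``bounds isotopic discs'' condition.
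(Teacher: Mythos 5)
Your framework (the dictionary between Haken sums and proper exchange systems, arguing each direction via Lemma~\ref{basic vertex criterion}) is the right starting point, but both directions have genuine gaps at exactly the places where the real work lies. In the forward direction, the step ``restricting the normal isotopy $X \cong Y$ to these patches then supplies the required normal isotopy between the two discs'' does not follow: the vertex property controls the summands only up to normal isotopy as whole surfaces, and it can happen that the swapped surfaces $X = (D - D_1) \cup D_2'$ and $Y = (D' - D_2') \cup D_1$ are each normally isotopic to $D$ even though $D_1$ and $D_2$ are \emph{not} normally isotopic across $A$ --- namely when $D$ contains parallel copies of the patches that the swap merely permutes. This is precisely why the paper's proof of the corresponding Q-theory statement (the first theorem of Section 3) runs an iterated ``maximal disc swapping'' argument: if one swap yields two copies of $D$, then $D$ contains two parallel copies of each patch, so swap those, then four, and so on, terminating by compactness. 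Your disjointness claim is also not automatic: for an exchange annulus the two components of $\partial A$ can bound \emph{nested} discs $D_1 \subset D_2$ in $D$, so they are not ``distinct components of $D$ cut along $\partial A$''; the paper rules out nesting by a separate construction producing a sphere summand (using a parallel copy of $D$, so that the identity reads $2D = X + Y$, which also shows your claim that a lone exchange annulus always yields a proper system with $D = X + Y$ is false in this case).

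In the reverse direction the innermost-surface argument is a non sequitur: two adjacent disc patches lying in different summands $X$ and $Y$ can perfectly well be normally isotopic across the exchange surface between them --- indeed this is the generic situation, and the correct proof \emph{exploits} such pairs rather than forbidding them. The argument the paper gives (Theorem~\ref{q-vertex criterion}) first chooses the decomposition $nD + \Sigma = X + Y$ with $|X \cap Y|$ minimal; then, whenever an exchange surface bounds adjacent normally isotopic disc patches, swapping them reduces the intersection, contradicting minimality. The substance of the proof is what happens when no immediate swap is available: take a least-weight disc patch, follow the chain of partner discs along successive exchange surfaces, and when this chain cycles, glue the intervening pieces into a zero-weight annulus, torus, or Klein bottle and derive a contradiction from the tangent-vector rotation argument inside a $2$-simplex. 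None of that can be replaced by an innermost choice, and without the minimality setup your claimed witness need not violate the criterion at all. Finally, your reading of the M\"obius band case is off: the paper does not handle one-sided exchange surfaces through the definition of normal isotopy of patches; it replaces a M\"obius component by the boundary annulus of its regular neighborhood, which is then an exchange annulus whose two boundary curves cobound an annulus in a single copy of $D$ and hence cannot bound disjoint discs, giving the required witness directly.
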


We will prove an analogue of this for Q-theory, but we will require an additional hypothesis.

\subsection{Normal surface Q-theory}
In \cite{tollefson1998}, Tollefson described a variant of normal surface theory that required only the quad coordinates.  In his Q-theory, there is one Q-matching equation for each interior edge of the triangulation of the $3$-manifold and a variable corresponding to each quad type.  

Suppose $M$ is triangulated with $t$ tetrahedra.  Then there are $3t$ quad types, which we can enumerate from 1 to $3t$.  Suppose each interior edge $e_k$ of the triangulation is oriented and we have determined a positive direction of rotation for $e_k$.  Consider a tetrahedron containing $e_k$ and a quad type $i$ that belongs to this tetrahedron.  We define $\epsilon_{k,i}$ to be 0 if this quad type avoids the edge $e_k$.  We will define it to be 1 or -1 according to the following procedure: consider the two faces $\Delta_1$ and $\Delta_2$ of the tetrahedron sharing the edge $e_k$ and suppose they are numbered according to the positive direction of rotation.  Label the tail of $e_k$ to be $a$ and the head to be $b$.  Then if we consider the normal arcs of the $i$-th quad type on these faces $\Delta_i$, either we have a normal arc which cuts off $a$ in $\Delta_1$ and then $b$ in $\Delta_2$, or we have a normal arc which cuts off $b$ in $\Delta_1$ and then $a$ in $\Delta_2$.  We define $\epsilon_{k,i} = 1$ in the former case and $\epsilon_{k,i}=-1$ in the latter case.
 
The Q-matching equations are then $\sum_{i=1}^{3t}\epsilon_{k,i}x_i = 0$ for each edge $e_k$.

\begin{defn}
A vertex-linking disc, for a boundary vertex $v$, is a connected normal surface consisting of all triangles which cut off a corner of a tetrahedron containing $v$.  Similarly, a vertex-linking sphere, for an interior vertex $v$, is a connected normal surface consisting of all triangles cutting off a corner of a tetrahedron containing $v$.
\end{defn}

\begin{thm}[\cite{tollefson1998}]
Suppose $v$ is a non-negative integral solution to the $Q$-matching equations.  Then there is a normal surface $F$ such that $v= v_Q(F)$ and $F$ has the following ``uniqueness'' property:  if there is another $F'$ such that $v_Q(F') = v$, then $F' = F + \Sigma$, where $\Sigma$ is a sum of vertex-linking discs and spheres. 
\end{thm}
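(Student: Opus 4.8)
The plan is to recover the triangle coordinates of the desired surface from the prescribed quad coordinates and then invoke the Unique Realization theorem. Write $v$ for the given quad coordinates and seek triangle coordinates $w$ so that the combined vector $(v,w)$ is a non-negative integral solution of the full normal matching equations satisfying the quad condition; since triangle coordinates are irrelevant to the quad condition, that condition holds as soon as $v$ does, so I assume $v$ is admissible (as is in any case necessary for a normal realization to exist). Recall that each normal arc type on an interior face arises from exactly one triangle and one quad in each of the two adjacent tetrahedra. Hence the matching equation attached to a face $\Delta$ and the arc cutting off a vertex $a$ of $\Delta$ has the shape $t_1 + q_1 = t_2 + q_2$, where $t_1,t_2$ are triangle coordinates at the corner $a$ on the two sides and $q_1,q_2$ are the quad coordinates meeting that arc. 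As the $q_i$ are known, this is just a prescribed difference $t_1 - t_2 = q_2 - q_1$ between two triangle coordinates.

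First I would organize these equations link by link. Fix a vertex $a$ of the triangulation and let $L_a$ be its link, a surface triangulated by the corner triangles $(a;\Delta^3)$ of the tetrahedra containing $a$; the triangle coordinate at such a corner becomes a value attached to the corresponding triangle of $L_a$, and two such triangles are adjacent across an edge of $L_a$ exactly when the tetrahedra share a face through $a$. Under this dictionary the equations above say that the difference of triangle-values across each edge of $L_a$ equals a quantity determined by the quads. Solving for the triangle coordinates is thus a discrete potential problem on $L_a$: find a function on the triangles realizing the prescribed jumps across edges. Such a function exists iff the jumps sum to zero around every loop, and since $L_a$ is a disc or a sphere (so $H^1(L_a)=0$) the only conditions are the local ones, one for each interior vertex of $L_a$. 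Such a vertex corresponds to an interior edge $e_k$ of the triangulation incident to $a$, and circling it in $L_a$ is the same as circling $e_k$ through the tetrahedra incident to it; the resulting consistency condition is precisely the Q-matching equation for $e_k$. Therefore the Q-matching equations are exactly the solvability conditions, and when they hold a solution exists on each $L_a$, unique up to an additive constant on the (connected) link.

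Adding the same integer $c$ to every triangle-value over $L_a$ means inserting $c$ parallel copies of the triangles linking $a$, that is, $c$ copies of the vertex-linking disc (if $a\in\partial M$) or sphere (if $a$ is interior). I would normalize by choosing on each $L_a$ the constant making the smallest triangle coordinate equal to $0$, yielding a canonical minimal non-negative triangle vector $w$. Then $(v,w)$ is a non-negative integral solution of the matching equations satisfying the quad condition, so the Unique Realization theorem produces a unique normal surface $F$ with $v(F)=(v,w)$, hence $v_Q(F)=v$.

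Finally, for uniqueness let $F'$ be any normal surface with $v_Q(F')=v$. Then $F$ and $F'$ carry the same quad coordinates, so their triangle coordinates satisfy the same prescribed-difference system on each $L_a$ and thus differ there by a single additive constant $c_a$. Minimality of $F$ together with non-negativity of $F'$ forces every $c_a\ge 0$, and by the previous paragraph $c_a$ records the number of copies of the vertex-linking disc or sphere at $a$ present in $F'$ but not in $F$. Assembling these gives $F'=F+\Sigma$ with $\Sigma$ a non-negative sum of vertex-linking discs and spheres, as required. The one genuinely delicate point, and the step I expect to demand real care, is verifying that the holonomy of the potential problem around an interior vertex of $L_a$ matches the signed sum $\sum_{i}\epsilon_{k,i}x_i$ defining the Q-matching equation for $e_k$: one must check that the orientation convention on $e_k$ and the rule assigning $\epsilon_{k,i}\in\{0,\pm 1\}$ reproduce exactly the alternating differences $q_2-q_1$ accumulated as one circles the edge. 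Everything else is bookkeeping in linear algebra.
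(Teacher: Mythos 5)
The paper does not actually prove this theorem: it is imported wholesale from Tollefson's paper \cite{tollefson1998}, so there is no internal proof to compare yours against. Judged on its own merits, your argument is essentially the standard (indeed Tollefson's) reconstruction, in cohomological dress: recovering triangle coordinates from prescribed quad coordinates is a discrete potential problem on each vertex link $L_a$; the Q-matching equations are exactly the integrability conditions around interior vertices of the links (equivalently, around interior edges of the triangulation); and simple connectivity of the links (discs or spheres in a genuine triangulation of a compact $3$-manifold) makes these local conditions sufficient. Your normalization (minimum triangle value zero on each link) and the resulting ``differ by a constant $c_a\ge 0$ per link'' analysis is also the right way to get the uniqueness-up-to-vertex-linking-surfaces clause, via the Unique Realization theorem applied to both $(v,w)$ and $(v,w')$.

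Two points deserve attention. First, you correctly spotted that the statement as printed omits a hypothesis: $v$ must also satisfy the quad condition (admissibility), since a solution with two distinct nonzero quad types in one tetrahedron cannot be realized by any normal surface; Tollefson's theorem carries this hypothesis, and your proof needs it precisely where you invoke Unique Realization. Second, the step you flag as delicate---that the holonomy of the jump cocycle around the interior vertex of $L_a$ corresponding to an edge $e_k$ equals $\sum_i \epsilon_{k,i} x_i$---is the real content linking the Q-matching equations to solvability, and you describe it without carrying it out. The verification is routine (each tetrahedron around $e_k$ contributes the difference of its two quad types meeting $e_k$, with sign recording the spiralling direction, which is exactly the paper's definition of $\epsilon_{k,i}$), but a complete write-up must include it, since with the wrong sign convention the ``consistency condition'' would not be the stated equation.
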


As with Haken's normal surface theory, we can define the projectivized solution space of solutions to the Q-matching equations, $\mathcal P_Q$.  A connected, two-sided normal surface that is a representative of the projective class of a vertex of $\mathcal P_Q$ is called a Q-vertex surface.  

We have an analogous criterion to Lemma~\ref{basic vertex criterion} for determining if a normal surface is Q-vertex.

\begin{lem}
Suppose $F$ is a normal surface.  Then $F$ is a Q-vertex surface if and only if for any normal surfaces $X$ and $Y$ such that $n F + \Sigma = X + Y$, with $\Sigma$ a union of vertex-linking surfaces and $n$ a positive integer, then every component of $X$ and $Y$ is either normally isotopic to $F$ or a component of $\Sigma$.
\end{lem}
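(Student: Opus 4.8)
The plan is to prove this Q-theory criterion by transporting the corresponding statement from standard normal surface theory (Lemma~\ref{basic vertex criterion}) across the projection that forgets triangle coordinates, being careful about how vertex-linking surfaces mediate the nonuniqueness of the realization in Q-theory. The key structural fact is Tollefson's uniqueness theorem: a nonnegative integral Q-solution determines a normal surface uniquely up to adding vertex-linking discs and spheres. First I would unwind what it means for $v_Q(F)$ to project to a vertex of $\mathcal P_Q$. Since $\mathcal P_Q$ is a convex polytope, $v_Q(F)$ is a vertex precisely when it cannot be written as a convex combination (equivalently, a sum of two nonnegative integral Q-solutions) in a nontrivial way; the standard reformulation is that whenever $n\,v_Q(F) = v_Q(X) + v_Q(Y)$ for nonnegative integral Q-solutions, both summands are nonnegative multiples of $v_Q(F)$.

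For the forward direction, I would assume $F$ is Q-vertex and take any $X,Y$ with $nF + \Sigma = X+Y$ (as normal surfaces, hence as an equation of Haken sums). Applying the quad-coordinate map $v_Q(\cdot)$ to both sides kills all triangle coordinates and in particular annihilates $\Sigma$, since vertex-linking surfaces consist only of triangles and so have $v_Q(\Sigma)=0$. This yields $n\,v_Q(F) = v_Q(X) + v_Q(Y)$ at the level of Q-solutions. Because $v_Q(F)$ is a vertex, each of $v_Q(X)$ and $v_Q(Y)$ must be a nonnegative multiple of $v_Q(F)$. Now I invoke Tollefson's theorem in the other direction: a surface with Q-vector a multiple of $v_Q(F)$ must be a union of copies of $F$ together with vertex-linking surfaces. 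Hence every component of $X$ (and of $Y$) is either normally isotopic to $F$ or is a vertex-linking surface — and I would argue the latter must actually be a component of the prescribed $\Sigma$ by comparing triangle counts in the original equation $nF+\Sigma = X+Y$.

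For the converse, I would prove the contrapositive: if $F$ is not Q-vertex, then $v_Q(F)$ fails to be a vertex, so some $n\,v_Q(F) = u_1 + u_2$ with nonnegative integral Q-solutions $u_i$ not proportional to $v_Q(F)$. By Tollefson's existence statement each $u_i = v_Q(X_i)$ for some normal surface $X_i$, and back in normal surface theory the quad coordinates of $X_1 + X_2$ agree with those of $nF$, so $X_1 + X_2$ and $nF$ differ only by a vertex-linking surface $\Sigma$, giving $nF + \Sigma = X_1 + X_2$. Since $u_1$ is not proportional to $v_Q(F)$, at least one component of $X_1$ is neither normally isotopic to $F$ nor a vertex-linking surface, which is exactly the negation of the stated condition.

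The main obstacle I anticipate is the bookkeeping in matching the vertex-linking surfaces: the criterion as stated insists that stray vertex-linking components of $X$ and $Y$ be \emph{components of the specific $\Sigma$} appearing in $nF+\Sigma=X+Y$, whereas Tollefson's theorem only guarantees agreement of the non-triangle part and leaves the triangle (vertex-linking) part underdetermined. Resolving this requires a careful accounting of triangle coordinates tetrahedron-by-tetrahedron — verifying that the total vertex-linking content on the right-hand side is exactly $\Sigma$ plus the vertex-linking parts of the $F$-copies — and handling the two-sided versus one-sided subtlety (the factor-of-two phenomenon recorded in the preceding lemma) when a copy of $F$ double-covers a one-sided surface. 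I expect the remaining steps to be routine once this normalization of the triangle coordinates is pinned down.
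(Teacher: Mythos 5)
The paper never proves this lemma --- it is stated as the Q-theory analogue of Lemma~\ref{basic vertex criterion} (itself unproved, following Jaco--Tollefson) --- so there is no argument of the author's to compare yours against, and your route (projectivize, use the extreme-point property of a vertex of $\mathcal P_Q$, transport back to surfaces via Tollefson's realization/uniqueness theorem) is certainly the natural and intended one. However, two of the steps you defer are genuine gaps, not routine bookkeeping. First, in the forward direction you pass from ``$v_Q(X)$ is a nonnegative multiple of $v_Q(F)$'' to ``$X$ is a union of copies of $F$ and vertex-linking surfaces.'' That inference needs the multiple to be a nonnegative \emph{integer} and needs $v_Q(F)$ to be the smallest integral point on its ray, and both can fail: if $F$ double covers a one-sided surface $G$, then $v_Q(F)=2v_Q(G)$, and the decomposition $F = G + G$ (with $n=1$, $\Sigma=\emptyset$) already violates the stated conclusion, even though such an $F$ meets the paper's definition of a Q-vertex surface (connected, two-sided, representing the projective class of a vertex). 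So this case cannot be postponed as ``routine''; either it must be ruled out, or the conclusion must be read up to the factor-of-two phenomenon, exactly as in the unnamed lemma preceding the standard-theory criterion. (For the paper's application the issue is harmless, since there $F$ is a disc, which cannot double cover anything.) Second, your contrapositive direction only treats failure of $v_Q(F)$ to project to a vertex, but $F$ can fail to be a Q-vertex surface by being disconnected or one-sided; each of these needs its own (easy) violating decomposition, e.g. $F = F_1 + F_2$ when $F$ is disconnected, and $4F = \widetilde F + \widetilde F$, two copies of the double cover, when $F$ is one-sided.

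A smaller point in the same direction: Tollefson's uniqueness theorem, applied to $v_Q(X_1+X_2)=v_Q(nF)$, produces vertex-linking corrections on \emph{both} sides, i.e. $nF + \Sigma_2 = X_1 + X_2 + \Sigma_1$, not directly $nF+\Sigma = X_1+X_2$ as you write. You must absorb $\Sigma_1$ into one of the summands and then observe that, because $v_Q(X_1)$ is not proportional to $v_Q(F)$, some component of $X_1$ is neither normally isotopic to $F$ nor vertex-linking (hence in particular not a component of $\Sigma_2$). Unlike the two items above, this one is fixable exactly along the lines you indicate.
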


\section{Proof of the Main Theorem}

\begin{thm}
If a properly-embedded normal disc $D$ is Q-vertex, then for any exchange annulus or disc $A$ for $D$, $\partial A$ bounds disjoint discs which are normally isotopic across $A$.
\end{thm}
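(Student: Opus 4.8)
The plan is to convert the single exchange surface $A$ into a Haken decomposition of $D$ and then run that decomposition through the characterization of Q-vertex surfaces established just above. First I would take the exchange system $\mathcal A = \{A\}$ and carry out the inverse regular exchanges it prescribes: by condition (5) of the definition of an exchange surface, each disc $E = A \cap \Delta^3$ tells us how to cut the two normal discs $D_1, D_2$ of $D$ along the arcs $\gamma_1, \gamma_2$ and reglue, and condition (2) (orientability of the regular neighborhood) lets us do this coherently along all of $A$. When $\mathcal A$ is proper this produces two embedded families $X$ and $Y$ with $D = X + Y$, a genuine Haken sum; the possibility that $\mathcal A$ is improper I will treat separately (see below).

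Second, I would pass to quad coordinates. Since a union $\Sigma$ of vertex-linking discs and spheres is built entirely from triangles, it is invisible in Q-coordinates, so the Haken sum $D = X + Y$ yields the Q-matching relation $v_Q(D) + v_Q(\Sigma) = v_Q(X) + v_Q(Y)$ for any such $\Sigma$. This is exactly an instance of the hypothesis in the preceding Q-vertex characterization (with $n = 1$). Because $D$ is a Q-vertex surface, that characterization forces every component of $X$ and of $Y$ to be either normally isotopic to $D$ or a vertex-linking surface. This is the step where the Q-theory formulation does the real work relative to the standard argument of \cite{jaco-tollefson1995}: the leftover closed pieces created by the exchange are allowed to be nontrivial vertex-linking surfaces rather than honestly trivial ones, and it is their vanishing in quad coordinates that lets the characterization absorb them.

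Third, I would translate this constraint back across $A$ by a case analysis on how the curves $\partial A$ sit in the disc $D$. Since $D$ is a disc, each component of $\partial A$ already bounds a subdisc, so the content is to rule out nesting and to upgrade ``bounds subdiscs'' to ``bounds disjoint subdiscs that are normally isotopic across $A$.'' The argument has a contrapositive shape: if the two curves of $\partial A$ were nested, or if the subdiscs they cut off were not parallel across $A$, then recapping $A$ with those subdiscs would create a component of $X$ or $Y$ that is neither normally isotopic to $D$ nor a vertex-linking surface --- for instance an essential annulus or a disc not parallel to $D$ --- contradicting the previous paragraph. Hence the two curves are unnested and cut off disjoint subdiscs $D_1, D_2$, and the constraint forces the piece of the decomposition that carries $A$ to be vertex-linking; the parallelism this encodes is exactly a normal isotopy from $D_1$ to $D_2$ keeping boundaries in $A$. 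The case in which $A$ is an exchange disc is handled identically, with the $\partial M$-arc of $\partial A$ and the disc patches of case (2) of the disc-patch definition playing the roles of the curves $c_1, c_2$ and the discs $D_1, D_2$.

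I expect the main obstacle to be the interplay between two points. The first is the properness question: for a single exchange surface the inverse exchange might reconnect $D$ into one self-intersecting surface rather than splitting it, so I will need to show either that this improper case cannot arise for a Q-vertex disc or that it too forces the desired configuration, using the two-sidedness supplied by condition (2). The second, and more essential, is the vertex-linking bookkeeping peculiar to Q-theory: because a normal surface is determined by its quad coordinates only up to addition of vertex-linking discs and spheres, I must be careful that this ambiguity does not conceal an essential summand --- that a component of $X$ or $Y$ which is genuinely non-parallel to $D$ cannot be mistaken for, or silently folded into, the vertex-linking slack $\Sigma$. Pinning down this distinction is precisely the reworking of Lemma 4.3 of \cite{jaco-tollefson1995} into the Q-theory setting.
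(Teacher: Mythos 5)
You have a genuine gap, and it sits exactly at the case you deferred. Your core construction --- perform the inverse regular exchange along $A$ to get a Haken sum $D = X + Y$, then feed this into the Q-vertex characterization with $n=1$ --- only produces a decomposition when the two discs $D_1, D_2$ bounded by $\partial A$ are \emph{not adjacent} along $A$. When $D_1$ and $D_2$ are disjoint, adjacent, and not normally isotopic --- which is the heart of the theorem --- the inverse exchange reconnects $D_1$ to $D - D_2$ and $D_2$ to $D - D_1$ through crossing copies of $A$, yielding a single self-intersecting connected surface: the exchange system $\{A\}$ is improper, and your step 2 never starts. Your first hoped-for escape (``the improper case cannot arise for a Q-vertex disc'') is backwards: for a Q-vertex disc the theorem itself says every exchange annulus bounds \emph{adjacent} isotopic discs, and adjacency is precisely what makes the single-surface system improper, so this is the generic situation, not a pathology. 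The paper handles this case by a different construction that your $n=1$ framework cannot reach: a disc swap between \emph{two} parallel copies of $D$, setting $X = (D - D_1)\cup D_2'$ and $Y = (D' - D_2')\cup D_1$, which gives $2D = X + Y$, i.e.\ invokes the characterization lemma with $n = 2$. Even then $X$ and $Y$ may each be normally isotopic to $D$ (this happens when $D$ contains parallel copies of $D_1$ and $D_2$), so the paper needs a further iteration --- swap doubled families of parallel copies, which must terminate by finiteness --- to arrive at $2D = X_n + Y_n$ with neither summand isotopic to $D$. Nothing in your proposal supplies either the $n \geq 2$ decomposition or this termination argument. The same issue recurs in the nested non-adjacent annulus case, where the paper again uses a parallel copy of $D$ and a sum of the form $2D = X + Y$.

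A secondary misdiagnosis: the vertex-linking bookkeeping you identify as ``the real work'' is actually idle in this direction of the equivalence. All the decompositions the paper constructs here have $\Sigma = \emptyset$; one only needs that a sphere or annulus summand is not normally isotopic to $D$. The $\Sigma$-ambiguity of quad coordinates does real work in the \emph{converse} statement (the paper's Q-vertex criterion, where irreducibility is also needed), not in this one.
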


\begin{rmk}
We don't need this direction for the proof of the main theorem, but its proof is instructive.  
\end{rmk}

\begin{proof}
We prove that $D$ is not vertex if there is an exchange annulus or disc $A$ not satisfying the conditions in the theorem.

Suppose there is an exchange disc $A$ which does not satisfy the conditions.  Such a disc's boundary always bounds disjoint discs $D_1$ and $D_2$ in $D$.  So these discs are either not adjacent along $A$ or not normally isotopic across $A$.  

If they are not adjacent, then we can create two new normal surfaces, $X = A \cup (D - (D_1 \cup D_2))$ and $Y = D_1 \cup A \cup D_2$.  We can perturb $X$ and $Y$ so that they only intersect along the core curve of $A$.  (Note what we have really done is do an inverse of a regular exchange; in the future, when we do such inverse exchanges, we will take this perturbation process for granted.)  Then $D = X + Y$ and $X$ is an annulus, which cannot be normally isotopic to $D$.  So $D$ cannot be vertex. 

So suppose the discs are not normally isotopic across the exchange disc $A$.  We will use a prime $'$ to indicate an appropriate normally isotopic copy, e.g. $D_2'$ is a normally isotopic copy of $D_2$.  Then we construct normal surface $X = (D-D_1) \cup D_2'$ and $Y = (D' - D_2') \cup D_1$, where $\cup$ indicates we have glued the surfaces together appropriately using a subsurface of $A$ and perturbed them so that $X$ is transverse to $Y$.  Note we can consider these surfaces as created by swapping a disc $D_1$ for $D_2$ or vice versa.  Then $2D = X+ Y$.  If $X$ and $Y$ are normally isotopic to $D$, then $D$ contains two parallel copies of $D_1$ and two parallel copies of $D_2$.  So we can construct $X_1$ and $Y_1$ from $D$ by swapping the parallel copies of $D_1$ for $D_2$ and vice-versa.  If yet again $X_1$ and $Y_1$ are normally isotopic to $D$, then there are four parallel copies of $D_1$ and $D_2$ in $D$, and we can form $X_2$ (resp. $Y_2$) by swapping the four parallel copies of $D_1$ for four copies of $D_2$.  Eventually this must terminate, so eventually we end up with a sum $2D = X_n + Y_n$ where each summand is not normally isotopic to $D$.      

Now that we've disposed of the case when $A$ is an exchange disc consider when $A$ is an exchange annulus.  The boundary of $A$ bounds two discs $D_1$ and $D_2$ in $D$.  Suppose they are not adjacent along $A$.  Then we have two cases.

\textbf{Case 1:}  \underline{$D_1$ and $D_2$ are disjoint}.  We can create two new normal surfaces by an inverse regular exchange along $A$, $X = A \cup (D - (D_1 \cup D_2))$ and $Y = D_1 \cup A \cup D_2$.  Then $D = X + Y$.  $Y$ is the union of two discs and is topologically a sphere, so cannot be normally isotopic to $D$.  Thus $D$ is not vertex.  

\textbf{Case 2:}  \underline{$D_1$ and $D_2$ are not disjoint}.  Suppose $D_1 \subset D_2$.  Let $D'$ be a parallel copy of $D$ and $D'_i$ a parallel copy of $D_i$ in $D'$. Define the normal surface $X = (D - D_1) \cup A' \cup (D'- D_2')$, where $A'$ is an annulus in $A$ that connects the two surfaces' boundaries.  Similarly define $Y = D_1 \cup A''\cup (D_2' - D_1') \cup A'''\cup D_1'$ where $A''$ and $A'''$ are appropriate annuli in $A$ connecting the surfaces.  Then $2D = D + D'= X + Y$.  Again we arrive at the conclusion that $D$ cannot be vertex by seeing that $Y$ is topologically a sphere.

So we can suppose $D_1$ and $D_2$ are discs adjacent along $A$.  If one is contained in the other, say $D_1 \subset D_2$, then as before, an inverse regular exchange will create $D$ as a Haken sum of surfaces that are not normally isotopic to $D$.  So suppose they are disjoint.  Then we can construct normal surfaces $X$ and $Y$ by swapping the discs $D_i$.  Define $X = (D-D_1) \cup D_2'$ s.t. $X\cap D = \partial D_2$ and $D_2'$ is a parallel copy of $D_2$, and $Y = (D' - D_2') \cup D_1$; we can suppose $X$ and $Y$ intersect only along the core curve of $A$.  Then $2D = X+ Y$.  If $D_1$ is not normally isotopic to $D_2$, we can use the maximal disc swapping argument as in the exchange disc case to conclude that neither $X$ nor $Y$ are isotopic to $D$.
\end{proof}

\begin{thm}\label{q-vertex criterion}
Suppose $D$ is a properly-embedded normal disc in an irreducible, triangulated $3$-manifold $M$ and for any exchange annulus or disc $A$ for $D$, $\partial A$ bounds disjoint discs which are normally isotopic across $A$.  Then $D$ is a Q-vertex surface.
\end{thm}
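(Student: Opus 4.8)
### Proof Strategy for Theorem~\ref{q-vertex criterion}

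The plan is to establish the converse direction of the Jaco--Tollefson criterion, now adapted to Q-theory. Concretely, I would show the contrapositive: assuming $D$ is \emph{not} a Q-vertex surface, I will produce an exchange annulus or disc $A$ whose boundary either fails to bound disjoint discs in $D$, or bounds disjoint discs that are not normally isotopic across $A$. By the Q-vertex criterion (the lemma analogous to Lemma~\ref{basic vertex criterion} stated just before this section), failure to be Q-vertex means there is an equation $nD + \Sigma = X + Y$, with $\Sigma$ a union of vertex-linking surfaces and $n$ a positive integer, in which some component of $X$ or $Y$ is neither normally isotopic to $D$ nor a component of $\Sigma$. The first step is to strip away the vertex-linking summand $\Sigma$: since vertex-linking discs and spheres are $0$-quad surfaces, they do not affect the quad coordinates, so after discarding trivial summands I may assume a genuine Haken-sum decomposition that realizes the nontriviality in quad space. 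This is precisely where the Q-theory hypothesis does its work, and it is the reason the theorem is stated for Q-vertex rather than ordinary vertex surfaces.

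Next I would analyze the intersection curves. Given the (reduced) Haken sum, the surfaces $X$ and $Y$ meet $D$'s copies along a union of regular curves sitting inside the triangulation. The key geometric idea, borrowed directly from \cite{jaco-tollefson1995}, is that the summands $X$ and $Y$ are obtained from the copies of $D$ by doing regular exchanges along these curves, so running the construction backward identifies a collection of exchange surfaces for $D$. Each regular curve, being a closed union of regular arcs meeting the faces of the truncated tetrahedra, cobounds together with subdiscs of $D$ a surface that intersects each tetrahedron in $0$-weight discs of the required form; checking that this surface is an annulus, M\"obius band, or disc, has orientable neighborhood, and admits the inverse regular exchange verifies all five clauses of the definition of exchange surface. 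Since $M$ is irreducible (and the curves are disjoint), I can arrange these exchange surfaces to be mutually disjoint, giving an exchange \emph{system} $\mathcal A$ for $D$.

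The heart of the argument is then to show that if \emph{every} exchange annulus/disc of $\mathcal A$ bounds disjoint, normally isotopic discs in $D$, the exchange system must be \emph{proper} and its summands trivial, contradicting our assumption. The idea is that when each $A \in \mathcal A$ bounds a pair of parallel, normally isotopic disc patches, doing the inverse regular exchanges merely swaps normally isotopic pieces; piecing these together across the whole system, I claim the resulting families $X$ and $Y$ consist entirely of surfaces normally isotopic to copies of $D$, with no ``extra'' components surviving. Equivalently, the hypothesis forces every disc patch to be matched with a normally isotopic adjacent patch, so the exchange is trivial and the decomposition degenerates. I expect the main obstacle to be precisely this bookkeeping: controlling how normal isotopies of individual disc patches across disjoint exchange surfaces glue into a global normal isotopy of the summands, especially in the M\"obius-band case, where the boundary curve may be permitted to self-intersect during the isotopy (as flagged in the definition of normally isotopic disc patches). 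Managing that case, and ruling out the possibility that a nontrivial connected summand hides inside a tower of nested exchange surfaces, is where the streamlined version of the \cite{jaco-tollefson1995} argument will have to be carried out with care.
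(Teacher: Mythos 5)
Your overall skeleton (prove the contrapositive, take a counterexample sum $nD + \Sigma = X + Y$, extract an exchange system from the regular curves, and argue that the normal-isotopy hypothesis kills the decomposition) does match the paper's outline, but there are two genuine gaps, and the first is fatal as stated. Your opening move --- ``stripping away'' $\Sigma$ because vertex-linking surfaces have zero quad coordinates --- is not justified and is false in general. The equation $nD + \Sigma = X + Y$ is an equation of \emph{full} normal coordinates (a geometric Haken sum); the summands $X$ and $Y$ need not split off vertex-linking components, and indeed after passing to a minimal-intersection counterexample pair both $X$ and $Y$ are connected, so there is nothing to discard and no way to cancel $\Sigma$ from both sides. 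The unavoidable consequence is that exchange surfaces for the sum $X+Y$ can connect a disc patch in a copy of $D$ to a disc lying in a component of $\Sigma$, and handling exactly this situation is a substantial part of the paper's proof (the case the paper calls $E_2 \subset F$): it is where irreducibility of $M$ is used, and where vertex-linking spheres and vertex-linking discs must be treated differently (for a vertex-linking disc the paper does an inverse exchange and derives a contradiction from the fact that the resulting sphere has boundary components of $M$ on both sides). Your remark that this step is ``precisely where the Q-theory hypothesis does its work'' inverts the situation: the presence of $\Sigma$ is the extra difficulty that Q-theory creates, not something Q-theory lets you wave away.

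The second gap is that the heart of your argument is asserted rather than carried out. The paper's actual mechanism is concrete: (i) choose $X,Y$ with $|X \cap Y|$ minimal among counterexamples, which also forces both to be connected; (ii) whenever an exchange surface bounds two disc \emph{patches} that are adjacent and normally isotopic, swapping them between $X$ and $Y$ strictly reduces $|X\cap Y|$ while preserving the sum --- contradicting minimality; (iii) to produce such a pair, start from a disc patch $E_1$ of \emph{least weight}, and follow the chain $E_1 \to E_1'$, then a patch $E_2$ inside $E_1'$, and so on; (iv) rule out the possibility that this chain cycles by gluing the pieces $E_i' - E_{i+1}$ into a zero-weight annulus, torus, M\"obius band, or Klein bottle $G$ and running the tangent-vector rotation argument (consecutive one-third rotations in a $2$-simplex are impossible). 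Steps (i)--(iv), in particular the minimality device that converts a successful swap into a contradiction and the combinatorial exclusion of cycles, are exactly the content your proposal defers to ``bookkeeping to be carried out with care.'' Also, the M\"obius-band worry you flag is dispatched in the paper by a one-line observation at the outset, not by any isotopy bookkeeping: if the exchange system contains a M\"obius band, the boundary of its regular neighborhood is an exchange annulus whose two boundary curves cobound an annulus in a single copy of $D$, so it cannot bound disjoint discs at all, and the theorem's conclusion is immediate.
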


\begin{rmk}
The irreducibility assumption is helpful in extending the Jaco--Tollefson criterion to the setting of Q-theory.  We don't know if it is necessary.  Jaco and Tollefson did not require irreducibility for their criterion, but they did require it for their version of our main theorem.
\end{rmk}

\begin{proof}
We prove the contrapositive: if $D$ is not a vertex surface, then there exists an exchange annulus or disc for $D$ such that its boundary does not bound disjoint disks which are normally isotopic along the exchange surface.  

Since $D$ is not vertex, $nD + \Sigma = X + Y$, where neither $X$ nor $Y$ have components normally isotopic to multiples of $D$, $n$ is a positive integer, and $\Sigma$ is a union of vertex-linking surfaces.  We can pick $X$ and $Y$ to have minimal intersection with respect to this property.  Note this implies each $X$ and $Y$ is connected.  For, if $X = X_1 \cup X_2$, a disjoint union, then $Y$ must intersect both $X_1$ and $X_2$.  Otherwise, an $X_i$ would be a union of copies of $D$ and/or vertex-linking surfaces and we could subtract $X_i$ from each side of the original equation.  Since $Y$ intersects both $X_i$'s, we can write $nD+ \Sigma = X_1 + Y'$, where $Y' = X_2 + Y$ and $X_1 \cap Y'$ has fewer intersections than $X \cap Y$.   

Consider a proper exchange system for the Haken sum $X+Y$.  Note that we can suppose no component of this system is a M\"obius band.  For otherwise, we can create an exchange annulus which is locally on both sides of the M\"obius band, and this annulus is the annulus we seek: it cannot bound disjoint discs at all since its two boundary components cobounds an annulus in a single copy of $D$.

Let $A$ be an exchange surface in the exchange system which has one boundary curve $\alpha_1$ bounding a disc patch $E_1$, which we consider to lie in $D$.  The other boundary curve of $A$, $\alpha_2$, must bound a disc $E_2$ in a copy of $D$ (which we call $D'$) or a component of $\Sigma$ (which we call $F$).    

We first dispose of the latter situation: $E_2 \subset F$.  If $A$ is a disc (and so necessarily is $F$), then pick $E_2$ to be adjacent.  If the $E_i$'s are normally isotopic, then since $E_1$ is a disc \emph{patch} and components of $\Sigma$ do not have exchange surfaces between them, $E_2$ is a disc patch also.  But then a disc swap between $X$ and $Y$ results in surfaces which still sum to $nD + \Sigma$, but with one less exchange surface, which contradicts minimality of $X \cap Y$.   

So now we can suppose the $E_i$'s are not normally isotopic.  Suppose they are, however, adjacent.  First note that $E_2$ must also be a disc patch.  This follows from irreducibility of $M$; since $E_1 \cup A \cup E_2$ bounds a $3$-ball, it cannot contain any part of $D$, so the only exchange surfaces inside this $3$-ball are ones connecting a disc in the interior of $E_2$ to a component of $\Sigma$.  But there are no exchange surfaces connecting components of $\Sigma$.  So $E_2$ is a disc patch. 

Now by swapping the $E_i$'s in $X$ and $Y$ to obtain $\widehat{X}$ and $\widehat{Y}$ resp., we now have $nD + \Sigma = \widehat{X} + \widehat{Y}$.  In order not to contradict minimality of the intersection $X\cap Y$, $\widehat{X}$ must be normally isotopic to a multiple of $D$, which means that $D$ must contain a copy of $E_2$, $E_2'$.  But the exchange surface $A$ spans $E_1$ and the vertex-linking surface $F$,  and since $D$ must be disjoint from $F$ (and all of $\Sigma$), this implies $E_1$ is in the parallel region between $E_2$ and $E_2'$.  This is a contradiction, since this would imply $E_1$ and $E_2$ are normally isotopic.  

It remains to consider when $E_i$'s are not adjacent.  If $F$ were a vertex-linking sphere, we could pick $E_2$ to be adjacent to $E_1$ along $A$ and use the previous argument.  So suppose $F$ is a vertex-linking disc.  Now by doing an inverse regular exchange along $A$, we obtain $D$ as a Haken sum of a sphere, $E_1 \cup A\cup E_2$, and an annulus $Z$, which intersects along the core curve of $A$.  But the sphere must bound a $3$-ball on one side; however it's clear the $3$-manifold on either side of it has a boundary component of $M$ ($Z$ has a boundary component on each).  

Thus from now we suppose $E_2 \subset D'$, with possibly $D'=D$.  If $E_1$ and $E_2$ are not adjacent, then: if 1) $E_2$ is in $D$: we are done; 2) $E_2$ is in a copy of $D$, $D'$: then extend $A$ past $D'$ to $D$ so that $A$ now bounds discs in $D$ which are not adjacent.  Then we are done.

So we can suppose $E_i$'s are adjacent along $A$.  If they are not disjoint, then we are also done.  So suppose also that they are disjoint.

If $E_1$ is not normally isotopic to $E_2$, then we are done if $D=D'$.  Otherwise, if $E_2\subset D'$, then we can extend $A$ as before to finish.

Thus we now suppose that every disc patch $E$ in a copy of $D$ given by our exchange system, has the property that $E'$, the disc given by following the exchange surface bounding $E$, is an adjacent disc (but not necessarily a patch), which is normally isotopic to $E$.  Furthermore by a prior argument, we can suppose that $E'$ is in a copy of $D$, i.e. not in a vertex-linking surface.  We will now derive a contradiction from these assumptions to complete the proof.

Pick $E_1$ to be a disc patch of least weight amongst all disc patches in copies of $D$.  Let $E_1'$ be the corresponding disc, as above.  If $E_1'$ is not a disc patch also, then look inside it for a disc patch $E_2$.  Corresponding to $E_2$ along an exchange annulus or disc $A_2$ is $E_2'$.  If this is not a disc patch also, then look inside $E_2'$ for a disc patch $E_3$ which is connected along $A_3$ to $E_3'$.  Since there are finitely many exchange surfaces this must eventually terminate.  Termination either means we have found a disc patch $E_i$ such that $E_i'$ is also a disc patch, or the process has cycled.  

If cycling occurs, we can suppose (after renumbering), that there are $n$ discs $E_i$ and that $E_{n+1}' = E_1$.  Recall that $E_1$ was least weight amongst all disc patches in copies of $D$.  Since $E_i'$ is normally isotopic to $E_i$, all together this implies the weight of all the $E_i$'s and $E_i'$'s are the same.  Note that all the $A_i$'s are either discs or annuli.

By gluing together all the disc or annuli of the form $E_i'-E_{i+1}$ we obtain a zero weight annulus, M\"obius band, torus, or Klein bottle $G$.  Consider a $2$-simplex $\Delta$ which contains a component $C$ of the intersection of $G$ with the $2$-skeleton.  Pick an orientation for $C$.  Since $C$ is a simple closed curve made of arcs $\alpha_j$ coming from $A_i \cap \Delta$ and alternating with subarcs $d_j$ of $D \cap \Delta$, the orientation of $C$ gives one for the $\alpha_j$'s and $d_j$'s.   

For each arc $d_j$ consider the side of $\Delta$ that it is pointing to.  As $j$ increases and we go around $C$, $d_j$ and $d_{j+1}$ can point at different sides.  To make the discussion simpler, we suppose we round the corners of $C$ slightly to obtain a smooth curve.  Then the tangent vector along $d_j$, but away from the ends, points in the same manner as $d_j$.  As we go around $C$, the tangent vector must make one complete rotation, possibly back-tracking at times, either clockwise or counter-clockwise.  
 
Since $E_i$ and $E_i$' are adjacent along $A_i$, the discs/annuli we glued together along the $A_i$ to obtain $G$ are not adjacent along $A_i$.  In particular, $d_j$ and $d_{j+1}$ are locally on opposite sides of $\alpha_j$.  Now we consider how the tangent vector changes as it moves from $d_j$ to $d_{j+1}$.  Either the tangent vector along $d_j$ points in the same direction as that along $d_{j+1}$ or differs by 1/3 of a complete revolution.  The tangent vector along $C$ must make consecutive 1/3 rotations in the same direction as otherwise it will never make a full rotation along $C$.  But it is easy to see that after one such 1/3 rotation, there is no room to make another 1/3 rotation in the same direction.  So this is a contradiction.

Therefore eventually we must find a disc patch $E_i$ such that $E_i'$ is also a disc patch.  Since they are normally isotopic, we can swap these discs between $X$ and $Y$ to obtain $\widehat{X}$ and $\widehat{Y}$ such that $|\widehat{X} \cap \widehat{Y}| < |X\cap Y|$  while still having $\widehat{X} + \widehat{Y} = nD + \Sigma$.  This contradicts minimality of the intersection.
\end{proof}

\begin{defn}[\bf Bad disc]
Let $\Sigma$ be a normal surface and $A$ an exchange annulus or disc for $\Sigma$.  Suppose $A$ bounds disjoint adjacent disc patches, $D_1$ and $D_2$.  A \emph{bad disc} in a 2-simplex $\Delta$ relative to $D_1$ is a component of $\Delta - (D_1 \cup A)$ which has 4 sides alternating between $A$ and $D_i$'s.
\end{defn}

\begin{defn}[\bf weight]
The \emph{weight} of a surface transverse to the $1$-skeleton of a triangulated $3$-manifold is the number of points of intersection with the 1-skeleton.
\end{defn}

\begin{lem}\label{no bad disc}
Suppose $A$ is an exchange annulus/disc for a disc $D$ with $\partial A$ bounding disjoint adjacent discs, $D_1$ and $D_2$.  If $D_1$ and $D_2$ are not normally isotopic along $A$ and weight of $D_1$ is minimal with respect to all such choices for $A$ and $D_i$'s, then there are no bad discs relative to $D_1$.  
\end{lem}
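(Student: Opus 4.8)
The plan is to prove the contrapositive through the minimality hypothesis. Assuming a bad disc relative to $D_1$ exists, I will manufacture a new admissible triple $(A', D_1', D_2')$ — an exchange annulus or disc $A'$ whose boundary bounds disjoint, adjacent, non--normally-isotopic discs $D_1', D_2'$ — with $\mathrm{weight}(D_1') < \mathrm{weight}(D_1)$, directly contradicting the minimality of $\mathrm{weight}(D_1)$ over all such triples. The mechanism driving the reduction is that $A$ meets each truncated tetrahedron in $0$-weight discs, so every arc of $A$ in a $2$-simplex $\Delta$ is disjoint from the $1$-skeleton; a bad disc exhibits a weighted normal arc of $D_1$ that can be traded, via a cut-and-paste across $A$, for a route running along these $0$-weight arcs, shedding at least one intersection with the $1$-skeleton.

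The first step is to select, among all bad discs relative to $D_1$, one that is innermost. A bad disc $B \subset \Delta$ is a component of $\Delta \setminus (D_1 \cup A)$, so its interior already avoids $D_1$ and $A$; choosing $B$ innermost further arranges that no smaller bad disc is nested inside it and that the arcs of $D \setminus D_1$ meeting $B$ do so as simply as possible. This localizes the surgery below and keeps it from colliding with the rest of $D \cup A$ in $\Delta$. For notation, the two sides of $B$ lying on $A$ are $0$-weight arcs $\alpha_1, \alpha_2$, and the two remaining sides are normal arcs $b_1$ in $D_1$ and $b_2$ in the adjacent disc $D_2$, with the four corners lying on $\partial A \cap \Delta$.

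The second step is the surgery: using the orientable product neighborhood of $A$ guaranteed by the definition of an exchange surface, I perform the cut-and-paste along the sub-band of $A$ that faces $B$, sliding $b_1$ across $B$. Because $\alpha_1, \alpha_2$ avoid the $1$-skeleton while $b_1$ crossed an edge of $\Delta$, the new disc $D_1'$ meets the $1$-skeleton in strictly fewer points than $D_1$, and, thanks to the innermost choice, all of its other intersection points are inherited unchanged; the operation simultaneously yields the modified exchange surface $A'$ and the companion disc $D_2'$. Should the cut-and-paste be the irregular exchange of Section~2 (so that a disc now crosses $\Delta$ twice), I follow it by Haken normalization, which does not raise weight and, since $M$ is irreducible, preserves isotopy type. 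The weight bookkeeping is routine once locality is in hand.

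The step I expect to be the main obstacle is not the weight count but verifying that $(A', D_1', D_2')$ is again \emph{admissible}: that $D_1'$ and $D_2'$ are embedded normal discs which are disjoint, adjacent across $A'$, and still \emph{not} normally isotopic. Embeddedness and normality follow because regular exchanges preserve normal-disc types and normalization repairs any irregular step; disjointness and adjacency persist because the entire move takes place on one side of $A$. The delicate point is non--normal-isotopy: I would argue that the guided exchange is, up to normal isotopy, the original adjacency precomposed with a single local slide across $B$, so it cannot render $D_1'$ and $D_2'$ normally isotopic unless $D_1$ and $D_2$ were already normally isotopic across $A$ — which the hypothesis forbids. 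Once admissibility is secured, the strict drop in $\mathrm{weight}(D_1)$ contradicts minimality, and hence no bad disc relative to $D_1$ can exist.
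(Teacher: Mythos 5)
There is a genuine gap, and it is the engine of your whole argument: the claim that a bad disc ``exhibits a weighted normal arc of $D_1$'' and that sliding $b_1$ across $B$ ``sheds at least one intersection with the $1$-skeleton.'' It does not. A bad disc is a component of $\Delta - (D_1\cup A)$ whose four sides alternate between arcs of $A\cap\Delta$ and subarcs of normal arcs of $D\cap\Delta$; its corners are endpoints of arcs of $A\cap\Delta$, which lie in the interior of $\Delta$ because $A$ is $0$-weight, and the subarcs $b_1,b_2$ meet the $1$-skeleton nowhere (a normal arc touches $\partial\Delta$ only at its two endpoints, which are not corners of $B$). Consequently the closure of a bad disc lies in the interior of $\Delta$ and contains no point of the $1$-skeleton at all, so your cut-and-paste across $B$ removes no weight: $\operatorname{weight}(D_1')=\operatorname{weight}(D_1)$, and the strict decrease that your contradiction requires never occurs. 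You have conflated bad discs with \emph{face folds} (Lemma~\ref{face fold}): a face fold does have a side on an edge $e$ of $\Delta$, and pushing $A$ through $e$ is precisely the weight-reducing isotopy --- but that is a different configuration handled by a different lemma.

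The paper's proof uses the bad disc quite differently: it compresses $A$ along the bad disc $C$ into two exchange surfaces $A_1$ and $A_2$, whose boundaries split $D_1$ and $D_2$ into subdiscs (so the weight drop comes from \emph{partitioning} $D_1$, not from shedding $1$-skeleton intersections), and then argues by a dichotomy: if both new pairs of discs were normally isotopic along $A_1$ and $A_2$ respectively, the two normal isotopies would glue along $C$ to a normal isotopy of $D_1$ to $D_2$ along $A$, contradicting the hypothesis; hence at least one new pair is a non--normally-isotopic adjacent pair of strictly smaller weight, contradicting minimality. This dichotomy is exactly what your proposal papers over. Your ``delicate point'' --- that the new pair $D_1',D_2'$ cannot be normally isotopic unless $D_1,D_2$ already were --- is asserted, not proved, and in the paper's setting it is genuinely false for a single pair: one of the two pairs produced by the compression may well be normally isotopic; only the conjunction is impossible, which is why both pieces $A_1$ and $A_2$ must be carried through the argument. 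Finally, your appeals to Haken normalization and irreducibility of $M$ import hypotheses the lemma does not have (and the paper's proof does not use); moreover normalization applies to properly embedded surfaces, whereas $D_1'$ is a subdisc of $D$, so that repair step is not even well defined here.
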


\begin{proof}
Let $C$ be a bad disc for $D_1$.  Compress $A$ along $C$ into $A_1$ and $A_2$, which are either both annuli or one annulus and one disc.  There may be pieces of an $A_i$ in a tetrahedron that are isotopic, keeping boundary in $D$, into a bad disc in a face of the tetrahedron.  We assume that the $A_i$ are isotoped through any such bad discs and repeat this as necessary.  It is not difficult to see the result is that the $A_i$'s are exchange surfaces.  The only thing to check is that each component of intersection of $A_i$ with a tetrahedron spans distinct faces.  

Consider the discs $D_1'$ and $D_2'$ bound by $\partial A_1$ in $D$, and the discs $D_1''$ and $D_2''$ bound by $\partial A_2$ in $D$.  $D_1'$ and $D_1''$ have less weight than $D_1$ since their union is $D_1$.   Similarly, $D_2'$ and $D_2''$ have less weight than $D_2$.  Suppose $D_1'$ and $D_1''$ are normally isotopic along $A_1$.  Then $D_2'$ and $D_2''$ cannot be normally isotopic along $A_2$.  Otherwise, we can extend the normal isotopy to one of $D_1$ to $D_2$ along $A$.  But this contradicts that we chose $D_1$ to be the least weight disc patch not normally isotopic along an exchange annulus or disc to an adjacent disc patch.
\end{proof}

\begin{defn}[\bf Face fold]
Let $F$ be a normal surface and $A$ an exchange surface for $F$.  Then a \emph{face fold} with respect to $F$ and $A$ is a component of $\Delta - (F \cup A)$ such that its boundary (in $\Delta$) is composed of $a \cup \gamma_1 \cup \gamma_2 \cup e$, where $a$ is a component of $A \cap \Delta$, $\gamma_i$ is a subarc of a component of $F\cap \Delta$, and $e$ is a subarc of an edge of $\Delta$.  An \emph{innermost} face fold is a face fold such that neither $\gamma_i$ has interior intersecting a component of $A \cap \Delta$.  If a disc patch $E$ intersects a face $\Delta$ so that the intersection contains part of the boundary for an innermost face fold, then we say $E$ \emph{lies on a face fold}.
\end{defn}

\begin{lem}\label{face fold}\label{no same arc}
Suppose $A$ is an exchange annulus/disc for a disc $D$.  Suppose $\partial A$ bounds adjacent disc patches $D_1$ and $D_2$ which are not normally isotopic along $A$.  Suppose $D_1$ has the least weight for all choices of exchange annuli or discs $A$ for $D$ that have this property.  Then:

\begin{enumerate}
\item $D_1$ does not lie on a face fold.
\item For each $2$-simplex $\Delta$, no component of $A\cap \Delta$ has endpoints in two normal arcs of $D\cap \Delta$ of the same type.
\end{enumerate}
\end{lem}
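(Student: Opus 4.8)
The plan is to prove part (1) by a weight-reduction argument against the minimality of $D_1$, and then to deduce part (2) from part (1); the common engine is that an innermost face fold carries a disc across which the exchange surface can be isotoped to lower the weight of its disc patches. First I would fix the combinatorial picture in a face $\Delta$: the normal arcs of $D\cap\Delta$ (three types) together with the arcs of $A\cap\Delta$ cut $\Delta$ into regions, and each arc $a$ of $A\cap\Delta$ runs from a normal arc lying in one disc patch to a normal arc lying in the other, since inside each truncated tetrahedron the components of $A$ join $D_1$ to $D_2$ (this is forced by the exchange-disc definition, where $\gamma_1\subset D_1$ and $\gamma_2\subset D_2$). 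A face fold is then a clean four-gon whose $A$-side is some $a$, whose edge-side $e$ lies on an edge $\epsilon$, and whose two $D$-sides $\gamma_1,\gamma_2$ are subarcs of the normal arcs meeting the endpoints of $a$; such a region can only close against a single edge when both of those normal arcs run to $\epsilon$, which in the setting of (2) is exactly what a same-type pair produces.

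For (1), suppose $D_1$ lies on an innermost face fold $C$ in $\Delta$, so $\gamma_1\subset D_1$ and, by the previous remark, $\gamma_2\subset D_2$. Because $C$ is innermost and clean, I would push the arc $a$, together with the sheet of $A$ carrying it, across $C$ and over the edge $\epsilon$, keeping $\partial A$ on $D$. This is an isotopy of $A$, so it does not change the topological type of $A$, and it slides the boundary curves $\partial D_1,\partial D_2$ off $\epsilon$, removing at least one intersection of $D_1$ with the $1$-skeleton and hence strictly lowering weight$(D_1)$. Let $A'$ be the result and $D_1',D_2'$ the new disc patches. Now I would split into two cases exactly as in the proof of Lemma~\ref{no bad disc}. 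If $D_1'$ and $D_2'$ are still not normally isotopic across $A'$, then $D_1'$ is an adjacent, non-isotopic disc patch of weight strictly less than that of $D_1$, contradicting the minimality hypothesis. If instead $D_1'$ and $D_2'$ are normally isotopic across $A'$, then since the new configuration differs from the old one only inside the single disc $C$, I can extend that normal isotopy back across $C$ to a normal isotopy of $D_1$ onto $D_2$, contradicting the hypothesis that $D_1$ and $D_2$ are not normally isotopic along $A$. Either way we reach a contradiction, so $D_1$ lies on no face fold.

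For (2), suppose some component of $A\cap\Delta$ joins two normal arcs of the same type, both cutting off a vertex $v$; then a face fold forms against an edge $\epsilon$ at $v$. Passing to the region of $\Delta-(D\cup A)$ adjacent to $\epsilon$ bounded by the first arc of $A$ it meets, I obtain an \emph{innermost} face fold, and since every arc of $A\cap\Delta$ joins a normal arc of $D_1$ to one of $D_2$, this innermost region has $\gamma_1\subset D_1$, so $D_1$ lies on it. This contradicts (1), and (2) follows.

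The step I expect to be the main obstacle is verifying that the isotoped surface $A'$ is genuinely an exchange surface — concretely, that after pushing $a$ over $\epsilon$ each component of $A'\cap\Delta^3$ still spans two distinct faces of its truncated tetrahedron, the same delicate point flagged in the proof of Lemma~\ref{no bad disc}. The innermost hypothesis on $C$ is precisely what keeps the push unobstructed and secures this condition; the accompanying weight bookkeeping and the extension of the normal isotopy across the single disc $C$ are then routine.
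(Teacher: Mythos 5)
Your part (1) is essentially the paper's argument: push $A$ across the innermost face fold and through the edge, which strictly reduces the weight of the disc patches, and then contradict either the minimality of $D_1$ or (by extending the normal isotopy back across the fold region) the hypothesis that $D_1$ and $D_2$ are not normally isotopic along $A$. Your case analysis is a reasonable expansion of the paper's terse ``preserving its other properties,'' and you correctly flag that the delicate point is checking the pushed surface is still an exchange surface.

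Part (2), however, has a genuine gap: you try to deduce it from part (1) alone, whereas that deduction breaks down exactly in the case the paper handles with Lemma~\ref{no bad disc}. You take the component of $\Delta-(D\cup A)$ adjacent to the edge $\epsilon$, bounded by the first arc of $A$ it meets, and assert it is an innermost face fold with $\gamma_1\subset D_1$. Neither assertion is justified once the strip between the two same-type normal arcs $\lambda_1,\lambda_2$ contains more than one arc of $A\cap\Delta$. First, your structural observation --- each arc of $A\cap\Delta$ has one endpoint on $\partial D_1$ and one on $\partial D_2$ --- does not force arcs meeting the strip to span from $\lambda_1$ to $\lambda_2$: a single normal arc $\lambda_1$ can meet both disc patches, so an arc could a priori have both endpoints on $\lambda_1$, in which case the edge-adjacent region is not a four-sided face fold at all. (The paper excludes this with a separate sidedness argument: a neighborhood of each component of $\partial A$ stays locally on one side of $D$, which traps every arc touching a $\lambda_i$ inside the strip and forces it to span.) Second, and more seriously: even when every arc in the strip spans, the subarcs of $\lambda_1$ lying in $D_1$ alternate with subarcs not in $D_1$ as one crosses the endpoints of successive arcs of $A$. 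So with two or more arcs in the strip, the subarc $\gamma_1$ between the edge and the first arc need not lie in $D_1$, and then $D_1$ lies on no face fold; the region $D_1$ does abut is a square bounded by two arcs of $A$ and two subarcs of $\lambda_1,\lambda_2$, i.e.\ a bad disc. That multiple-arc case is unreachable from part (1); the paper's proof disposes of it by citing Lemma~\ref{no bad disc}, which your argument never invokes. To repair the proposal you need both the sidedness argument and the dichotomy: a single arc in the strip yields a face fold on which $D_1$ lies (contradicting (1)), while several arcs yield a bad disc (contradicting Lemma~\ref{no bad disc}).
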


\begin{proof}
If $D_1$ lies on a face fold, then we can isotope $A$ by pushing it through part of the edge in the boundary of this face fold.  This reduces the weight of $D_1$ while preserving its other properties, which contradicts we picked $D_1$ to be least weight.

Let $\Delta$ be a face. Suppose $\lambda_1$ and $\lambda_2$ are two normal arcs of the same type in $D\cap \Delta$, and $a$ is an arc of $A\cap \Delta$ spanning the $\lambda_i$'s.  If we consider a small neighborhood of the component of $\partial A$ which bounds $D_1$, we see that it must stay locally on the same side of $D$.  Similarly for $D_2$.  Thus any arc of $A\cap \Delta$ which touches either $\lambda_i$ must be between the $\lambda_i$'s.  If $a$ is the only such arc, then this contradicts the earlier fact that $D_1$ does not lie on a face fold.  If there is another such arc, then this creates a bad disc.  But this contradicts Lemma~\ref{no bad disc}.  
\end{proof}

\begin{defn}[\bf size]
Define the \emph{size} of a normal surface, $\sigma(F)$, to be the number of nonzero normal coordinates.
\end{defn}

\begin{lem}\label{swap lemma}
Suppose $A$ is an exchange annulus (resp. exchange disc) for a disc $D$.  Suppose $\partial A$ bounds adjacent disc patches $D_1$ and $D_2$ which are not normally isotopic along $A$.  Suppose $D_1$ has the least weight for all choices of exchange annuli or discs $A$ for $D$ that have this property.  Let $D'$ be defined as the normal disc obtained from $D$ by replacing $D_2$ and all subdiscs of $D$ which are parallel to $D_2$ by copies of $D_1$.  

If $D_1$ and $D_2$ have the same weight and $D$ and $D'$ have the same size, then there is an extension of $A$ to a zero weight annulus (resp. disc) $A'$ so that $\partial A'$ bounds $D_1$ and $D_3$, where $D_3$ is a disc in $D'$ not normally isotopic to $D_1$ along $A'$.  This extension is a union of exchange annuli (resp. discs) connecting copies of $D_1$ and in particular, a copy of $D_1$ with $D_3$.  
\end{lem}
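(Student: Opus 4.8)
The plan is to construct $A'$ greedily, extending $A$ one exchange annulus (resp.\ disc) at a time through the copies of $D_1$ that replace the parallel family of $D_2$, stopping the first time the far disc fails to be a copy of $D_1$. The starting datum is $A$ itself: in $D'$ its far boundary $\alpha_2$ now lies on the copy $\widehat{D}_1$ of $D_1$ that was glued in place of $D_2$. Because $\mathrm{weight}(D_1)=\mathrm{weight}(D_2)$ and $\sigma(D)=\sigma(D')$, the swap neither changes the weight nor deletes any normal disc type, so the subdiscs of $D$ parallel to $D_2$ assemble into a genuine parallel family whose product regions in $M$ survive, after the replacement, as product regions between consecutive copies of $D_1$. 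Each such product region carries an exchange annulus (resp.\ disc) joining the copies of $D_1$ on its ends, and concatenating $A$ with these pieces begins the extension.

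First I would verify that each piece adjoined to $A$ is a legitimate exchange surface and that the running union stays zero weight. The only substantive points are that each component of intersection of a new piece with a tetrahedron spans \emph{distinct} faces and that no weight is introduced; here Lemmas~\ref{no bad disc} and \ref{no same arc} do the work, exactly as in the proof of Lemma~\ref{no bad disc}. Since $D_1$ has least weight it lies on no face fold, and no component of the exchange surface has both endpoints in normal arcs of the same type; this is precisely what forbids a face of the continuation from doubling back or from acquiring an intersection with the $1$-skeleton. The topological type is inherited from $A$: adjoining product annuli (resp.\ discs) keeps $A'$ an annulus (resp.\ a disc).

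Next I would show the process terminates at a disc $D_3$ not normally isotopic to $D_1$. Finiteness of the triangulation bounds the number of available exchange surfaces and of parallel copies, so the chain cannot extend forever; the only alternative to halting is that it cycles. A cycle would glue the intermediate product pieces into a closed zero weight annulus, torus, M\"obius band, or Klein bottle, and the tangent-vector rotation argument used in the proof of Theorem~\ref{q-vertex criterion} rules this out. Hence the chain halts, and by the stopping rule the terminal disc $D_3$ is not a copy of $D_1$, i.e.\ $D_3 \not\sim D_1$ along $A'$. That such a terminal disc must appear---rather than the chain consisting of copies of $D_1$ alone---is forced by the hypothesis $D_1 \not\sim D_2$: already $A$ fails to realize a normal isotopy between $D_1$ and $\widehat{D}_1$, so this failure must be resolved further out along the parallel family, at $D_3$.

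The step I expect to be the main obstacle is the interface between the combinatorics and the geometry in the construction: proving that the size hypothesis $\sigma(D)=\sigma(D')$ really guarantees that the parallel family of $D_2$ and its product regions pass intact to $D'$, so that the exchange structure genuinely extends rather than collapsing or self-intersecting. Keeping the running surface embedded, zero weight, and of the correct topological type while concatenating many pieces---and simultaneously maintaining the span-distinct-faces condition in every tetrahedron---is where the care is needed, and it is precisely the least-weight choice of $D_1$, through Lemmas~\ref{no bad disc} and \ref{no same arc}, that makes each local extension forced and unambiguous.
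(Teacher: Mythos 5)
Your construction never leaves the parallel family of $D_2$, and that is the fatal gap. You build $A'$ by concatenating $A$ with exchange annuli lying in the product regions between consecutive parallel copies of $D_2$ (copies of $D_1$ after the swap). Every piece you adjoin joins two members of that family, so when the family is exhausted the far boundary of your $A'$ lies on the last glued-in copy of $D_1$ --- a disc which, by construction, is a copy of $D_1$. You acknowledge this and assert that the failure of $A$ to realize a normal isotopy ``must be resolved further out along the parallel family, at $D_3$,'' but nothing in your construction produces such a $D_3$: your extension has no mechanism for reaching any disc outside the family. This is precisely where the size hypothesis enters the paper's proof, and you have misread its role. The paper does not use $\sigma(D)=\sigma(D')$ to show that product regions ``survive'' the swap; it uses it to show that when a copy of $D_2$ is switched for a copy of $D_1$, each normal disc $E_2$ of $D_2$ changes type, so the old type of $E_2$ would vanish from the coordinate vector --- contradicting preservation of size --- unless some \emph{other} normal disc of the same type as $E_2$ already exists in $D$. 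The extension of $A$ is then carried out arc by arc inside each tetrahedron, from the switched disc across to this other disc of the same type; Lemmas~\ref{no bad disc} and~\ref{no same arc} guarantee that at most one arc of $A\cap\Delta$ spans a given pair of normal arc types, so this extension is unambiguous. Repeating once per parallel copy, the process terminates with no cycling argument needed (each step consumes one of the finitely many copies of $D_2$; your appeal to the tangent-rotation argument from the proof of Theorem~\ref{q-vertex criterion} is borrowed machinery that is not required here), and it terminates on a disc $D_3$ that is \emph{not} in the parallel family of $D_2$, because by that point every member of the family has been switched and passed through.

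Your conclusion also stops short of what the lemma asserts. Even granting a terminal disc $D_3$, one must show $D_3$ is not normally isotopic to $D_1$ \emph{along} $A'$; ``not a copy of $D_1$ by the stopping rule'' is not that statement. The paper finishes as follows: if $D_3$ and $D_1$ are not adjacent along $A'$, they cannot be normally isotopic along it; if they are adjacent, then the position of $D_2$ lies between $D_1$ and $D_3$, so a normal isotopy from $D_1$ to $D_3$ along $A'$ would force $D_2$ to be normally isotopic to $D_1$ along $A$, contradicting the hypothesis. Without the size-driven extension past the parallel family and this between-ness argument, the conclusion of the lemma does not follow from your construction.
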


\begin{proof}
By Lemma~\ref{no same arc} arcs of $A\cap \Delta$ span different normal arcs types.  In addition, there are no bad discs relative to $D_1$ or $D_2$ (by Lemma~\ref{no bad disc}) and $D_1$ doesn't lie on a face-fold (by Lemma~\ref{face fold}).  These last two conditions imply that there can only be at most one arc of $A\cap \Delta$ spanning a given pair of normal arc types.    

Suppose we have such an arc $a$ which spans $\lambda_1$ and $\lambda_2$ and let $\lambda_i$ correspond to $D_i$.  Since $D_1$ does not lie on a face fold, after the switch of $D_2$ to a copy of $D_1$, $\lambda_2$ becomes of the same type as $\lambda_1$.  Indeed, each $\lambda_i$ was part of a normal disc $E_i$ with the $E_i$'s of distinct types, and after the switch, $E_2$ becomes of the same type as $E_1$.  By hypothesis, the size of $D$ does not change because of the switch of $D_2$ for a copy of $D_1$.  Thus there must be another normal disc of the same type as $E_2$ before the switch.

We can extend the component of $A$ containing $a$ in the tetrahedron to span to this other normal disc, and evidently we can do this for every component of $A$ in every tetrahedron.  Therefore, starting with $D$, we can switch each copy of $D_2$ to a copy of $D_1$, one at a time, and perform this extension each time, to obtain an annulus $A'$.

$A'$, the extension of $A$, still has one boundary bounding $D_1$, but now the other bounds a disc $D_3$ which cannot be normally isotopic to $D_2$.  If $D_3$ and $D_1$ are not adjacent along $A'$, then certainly they cannot be normally isotopic along $A'$.  If they are adjacent, then since $D_2$ is between $D_1$ and $D_3$, $D_3$ cannot be normally isotopic to $D_1$ either.  
\end{proof}

\begin{thm}\label{main theorem}
Suppose $D$ is a properly-embedded normal disc for an irreducible, triangulated $3$-manifold $M$ such that $\partial D$ is not nullhomotopic in $\partial M$.  If $D$ minimizes the pair $(\operatorname{weight}(D), \operatorname{size}(D))$ over all such discs, then $D$ is a vertex surface.
\end{thm}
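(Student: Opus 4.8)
The plan is to deduce the theorem from the Q-vertex criterion of Theorem~\ref{q-vertex criterion}: since $M$ is irreducible and $D$ is a properly-embedded normal disc, it suffices to show that every exchange annulus or disc $A$ for $D$ has the property that $\partial A$ bounds disjoint discs in $D$ that are normally isotopic across $A$. I would argue by contradiction, assuming some exchange surface $A$ fails this, and then use the minimality of $(\operatorname{weight}(D),\operatorname{size}(D))$ to produce a strictly ``smaller'' competing disc with the same (non-nullhomotopic) boundary, which is the contradiction.

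First I would reduce to the genuinely hard configuration. Because $D$ is a disc, the curves of $\partial A$ bound subdiscs; when these are nested or non-adjacent one can extend or truncate $A$ and peel off a sphere summand as in the proof of the first theorem of this section, so such cases either cannot persist for a minimal $D$ or are disposed of directly. This leaves the case that $A$ bounds disjoint adjacent disc patches $D_1$ and $D_2$ that are not normally isotopic across $A$. Among all exchange surfaces and disc patches witnessing a failure, I would then choose $D_1$ of least weight, so that Lemmas~\ref{no bad disc}, \ref{face fold}, and \ref{swap lemma} all apply to $D_1$.

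The core is a two-step descent. Since $D_2$ is itself a bad disc patch (adjacent to and not normally isotopic to $D_1$), the minimality of $\operatorname{weight}(D_1)$ forces $\operatorname{weight}(D_1)\le\operatorname{weight}(D_2)$; if this inequality were strict, replacing $D_2$ and all its parallel copies by copies of $D_1$ would produce a normal disc $D'$ with $\partial D'=\partial D$ and strictly smaller weight, contradicting minimality. Hence $\operatorname{weight}(D_1)=\operatorname{weight}(D_2)$, so this swap preserves weight. The swap also cannot raise $\operatorname{size}$, since every normal disc type appearing in $D_1$ already occurs in $D$ and so no new coordinate becomes nonzero; if $\operatorname{size}(D')<\operatorname{size}(D)$ we again contradict the minimality of the pair. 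Therefore $\operatorname{size}(D')=\operatorname{size}(D)$, which is exactly the hypothesis of Lemma~\ref{swap lemma}. Throughout, the reason all parallel copies of $D_2$ are swapped simultaneously is to guarantee that $D'$ is again an embedded normal disc with unchanged boundary.

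Applying Lemma~\ref{swap lemma} produces a zero-weight extension $A'$ of $A$ whose boundary bounds $D_1$ and a disc $D_3\subset D'$ that is not normally isotopic to $D_1$ along $A'$. The main obstacle, which I expect to be the crux of the entire argument, is to extract a contradiction from this zero-weight bad exchange annulus in the minimizer $D'$. The plan here is to exploit that $A'$ misses the $1$-skeleton: combining irreducibility of $M$ with the combinatorial control already established in Lemmas~\ref{no bad disc} and \ref{face fold} (no bad discs, $D_1$ not on a face fold, and each arc of $A'\cap\Delta$ spanning distinct normal arc types), one shows that a zero-weight exchange annulus separating equal-weight discs cannot keep $D_1$ and $D_3$ in distinct normal isotopy classes. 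Concretely, either the slide of $D_1$ across the edge-avoiding annulus $A'$ is forced to be a genuine normal isotopy to $D_3$, contradicting Lemma~\ref{swap lemma}, or a final swap along $A'$ (now maximally extended) eliminates the distinguishing normal disc type of $D_3$ and strictly lowers $\operatorname{size}$, contradicting minimality. In either outcome the assumed failure of the geometric condition is impossible, so the hypothesis of Theorem~\ref{q-vertex criterion} holds and $D$ is a $Q$-vertex surface.
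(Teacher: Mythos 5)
Your setup matches the paper's proof closely: reduce via Theorem~\ref{q-vertex criterion} to a bad exchange surface $A$, dispose of the nested and non-adjacent configurations using irreducibility and weight minimality, deduce $\operatorname{weight}(D_1)=\operatorname{weight}(D_2)$ and $\operatorname{size}(D')=\operatorname{size}(D)$ from minimality of the pair, and feed this into Lemma~\ref{swap lemma}. Up to that point you are reconstructing the paper's argument (though note the paper also has to check, when $A$ is an exchange disc, that the swap keeps the boundary essential in $\partial M$; you gloss over this).

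The genuine gap is your final step. You correctly identify it as the crux, but your proposed dichotomy does not work. Your first horn --- that the slide across the ``zero-weight'' $A'$ is \emph{forced} to be a normal isotopy of $D_1$ to $D_3$ --- is exactly what Lemma~\ref{swap lemma} rules out ($D_3$ is constructed to be \emph{not} normally isotopic to $D_1$ along $A'$), and zero weight gives no extra leverage: \emph{every} exchange surface is zero weight by definition (condition (3) of the definition), so this property cannot single out $A'$ as rigid. Your second horn --- that a final swap strictly lowers $\operatorname{size}$ --- cannot occur either: by minimality of the pair, no weight-preserving swap can lower size, and indeed that preservation is precisely the hypothesis that makes Lemma~\ref{swap lemma} applicable. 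What the paper actually does is \emph{iterate}: inside $A'$ there is an exchange annulus $A''$ whose boundary bounds $D_3$ and a parallel copy of $D_1$; this new configuration satisfies all the original hypotheses (adjacent, disjoint, equal weight, not normally isotopic), so one swaps all copies of $D_3$ for $D_1$, applies Lemma~\ref{swap lemma} again, and extends the annulus further. Each extension passes through fresh normal discs of $D$, and since a fixed normal disc has only finitely many normal discs, the extension process must terminate --- but the hypotheses guarantee it never can, and this finiteness contradiction (not a one-shot dichotomy) is what completes the proof. Without this iteration-plus-termination argument, your proof does not close.
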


\begin{proof}
Suppose such a $D$ is not a vertex surface.  Then by Lemma~\ref{q-vertex criterion}, $D$ has an exchange annulus or disc $A$ such that $A$ either doesn't bound disjoint adjacent discs, or the disjoint adjacent discs bound by $\partial A$ are not normally isotopic along $A$.

We want to apply Lemma~\ref{swap lemma}.  So we need to show that $\partial A$ bounds adjacent discs.  Suppose $A$ is an exchange disc.  By definition of an exchange \emph{disc}, $\partial A$ meets $D$ in two properly embedded arcs which split off disjoint discs $D_1$ and $D_2$.  If $D_1$ and $D_2$ are not adjacent, an inverse regular exchange along $A$ results in a disc $\widehat{D}$ and an annulus $F$.  If $\partial \widehat{D}$ is nullhomotopic in $\partial M$, then since $M$ is irreducible, then $\widehat{D} \cup E$, where $E$ is the disc in $\partial M$ bound by $\partial \widehat{D}$, bounds a $3$-ball.  But we can use $F$ to find a closed curve puncturing $\widehat{D} \cup E$ only once, which means the sphere does not separate, a contradiction.  Thus $\widehat{D}$ must be essential, but it has lesser weight than $D$, which contradicts that we picked $D$ to be the least weight essential disc.    

So when $A$ is a disc, it bounds adjacent discs.  Suppose $A$ is an exchange annulus.  Then $\partial A$ bounds discs $D_1$ and $D_2$ on $D$.  Suppose they are not disjoint, say $D_1 \subset D_2$.  Then we can remove $D_2$ from $D$ and join $D_1$ to the remaining part of $D$ using $A$.  If this new disc is not normal, then we can use the normalization procedure to result in a normal disc.  In any case, the new normal disc has less weight than $D$, since $D_1$ has less weight than $D_2$.

Thus it must be that $D_1 \cap D_2 = \emptyset$.  If they are not adjacent along $A$, then $D_1 \cup_A D_2$ is a normal sphere.  It cannot separate since $D - (D_1 \cup D_2)$ is on either side of the sphere.  So $D_1$ and $D_2$ are adjacent along $A$.  

According to our starting assumption, $D_1$ and $D_2$ are not normally isotopic along $A$.  Now these two discs have the same weight.  Otherwise we could swap the one of greater weight for a copy of the other, to create a disc of less weight than $D$.  In the case of $A$ an exchange disc, we have to take care that the swap keeps the boundary essential in $\partial M$.  But this is easy to establish; consider $D_1 \cup A \cup D_2$.  If its boundary is not nullhomotopic in $\partial M$, then after normalization we have a normal essential disc of lesser weight.  So the boundary of $D_1\cup A\cup D_2$ must bound a disc in $\partial M$ and clearly the swap of $D_2$ for $D_1$ will result in an essential disc.

So if we switch all parallel copies of $D_2$ for copies of $D_1$ to obtain a disc $D'$, we preserve weight.  In addition, the size of $D'$ must be the same as $D$, since we minimized size in the weight class of $D$.  By picking $D_1$ to be least weight over all discs bound by an exchange annulus that are not normally isotopic, we can apply Lemma~\ref{swap lemma}, to conclude that $A$ can be extended to an annulus $A'$ for $D'$ so that $\partial A'$ bounds $D_1$ and $D_3$, where $D_3$ is a disc not normally isotopic to $D_1$.  There is an exchange annulus $A''$ contained in $A'$ such that $\partial A''$ bounds $D_3$ and a parallel copy of $D_1$.  We can repeat the above argument to deduce that $D_3$ is adjacent along $A''$ to this copy of $D_1$, switch all parallel copies of $D_3$ for $D_1$ to obtain $D''$, and then apply Lemma~\ref{swap lemma}.  Our hypotheses ensure we can continue extending the exchange annulus and finding discs that are not normally isotopic to $D_1$.  However, this process needs to stop, since we cannot keep extending the annulus without eventually running out of normal discs.  This contradiction completes the proof.
\end{proof}

\end{document}